\documentclass[11pt, reqno]{amsart}
\usepackage{subfig, enumerate, tikz-cd}
\usepackage{bookmark}
\usepackage{filecontents}
\usepackage[shortlabels]{enumitem}
\usepackage[section] {placeins}
\usepackage{cases}
\usepackage{amscd,amsfonts,amssymb,amsmath,tikz-cd}
\usepackage{bm}
\usepackage{hyperref}
\usepackage{epsfig}
\usepackage{mathtools}
\usepackage{float}
\usepackage{color}
\usepackage[normalem]{ulem}
\usepackage[sort]{cite}
\newtheorem{theorem}{Theorem}[section]

\newtheorem{proposition}[theorem]{Proposition}
\newtheorem{corollary}[theorem]{Corollary}

\newtheorem{lemma}[theorem]{Lemma}

\newtheorem{example}[theorem]{Example}

\theoremstyle{definition}

%

\newcommand{\Aut}{\operatorname{Aut}}

\newcommand{\id}{\mathrm{id}}

\newcommand{\cl}{\mathrm{cl}}

\newcommand{\overbar}[1]{\mkern 1.5mu\overline{\mkern-1.5mu#1\mkern-1.5mu}\mkern 1.5mu}

\setlength\oddsidemargin{.8mm}
\setlength\evensidemargin{.8mm}
\setlength\textheight{21.5cm}
\setlength\textwidth{16.5cm}
\usepackage[autostyle]{csquotes}

\begin{document}
\title{Bridging colorings of virtual links from virtual biquandles to biquandles}
\author{Mohamed Elhamdadi}
\author{Manpreet Singh}
\address{University of South Florida, \\
Tampa, FL, 33620}
\email{emohamed@usf.edu}
\email{manpreet.singh2@fulbrightmail.org\\manpreetsingh@usf.edu}

\subjclass[2020]{57K12, 57K10}
\keywords{Virtual knots, virtual braids, biquandles, virtual biquandles}

\begin{abstract}
A biquandle is a solution to the set-theoretical Yang-Baxter equation, which yields invariants for virtual knots such as the coloring number and the state-sum invariant. A virtual biquandle enriches the structure of a biquandle by incorporating an invertible unary map. This unary operator plays a crucial role in defining the action of virtual crossings on the labels of incoming arcs in a virtual link diagram. This leads to extensions of invariants from biquandles to virtual biquandles, thereby enhancing their strength.

In this article, we establish a connection between the coloring invariant derived from biquandles and virtual biquandles. We prove that the number of colorings of a virtual link $L$ by virtual biquandles can be recovered from colorings by biquandles. We achieve this by proving the equivalence between two different representations of virtual braid groups. Furthermore, we introduce a new set of labeling rules using which one can construct a presentation of the associated fundamental virtual biquandle of $L$ using only the relations coming from the classical crossings. This is an improvement to the traditional method, where writing down a presentation of the associated fundamental virtual biquandle necessitates noting down the relations arising from the classical and virtual crossings.
\end{abstract}

\maketitle
\section{Introduction}
In classical knot theory, the double points in a knot diagram indicate the over and under crossing information. Kauffman \cite{MR1721925} generalized these diagrams by introducing a new type of crossing which neither indicates which arc passes over nor which arc passes under. These crossing are termed as virtual crossings and are indicated as \begin{tabular}{@{}c@{}}\includegraphics[width=3ex]{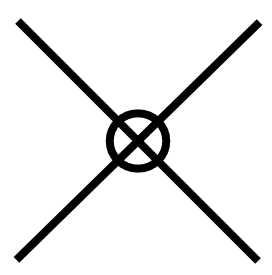}\end{tabular}. Diagrams with such crossings are referred as virtual link diagrams. These diagrams are considered up to the equivalence relations generated by planar isotopies and generalized Reidemeister moves. These moves encompass both traditional Reidemeister moves and their analogues involving virtual crossings. In \cite{MR1905687, MR1749502}, virtual knot theory is interpreted as knot theory in oriented thickened surfaces up to a certain equivalence. Subsequently, Kuperberg \cite{MR1997331} proved that virtual knot theory is indeed the study of links in $3$-manifolds, thereby deducing that virtual knot theory represents a true topological extension of classical knot theory.

Kauffman \cite{MR1721925} also introduced analogues of Artin braid groups, referred to as virtual braid groups. Geometrically, in addition to classical generators, virtual braid groups have generators that involve virtual crossings and an additional set of relations. These added relations encode new moves beyond the classical Reidemeister moves. Moreover, Kamada \cite{MR2351010} and Kauffman-Lambropoulou \cite{MR2371718, MR2128049} proved Alexander and Markov theorems for oriented virtual knots. Since the introduction of virtual knot theory, numerous works have focused on virtual braids \cite{MR2493369, MR3597250, MR4352636} and virtual links \cite{MR1721925, MR1914297, MR2100692, MR2153118, MR2515811, MR3717970}.  

The notions of birack and biquandle, introduced in  \cite{MR2100870}, yield natural invariants of virtual braids and virtual knots, respectively. In \cite{MR3666513}, computations of biracks and biquandles of small sizes are presented, and, as an application, it is reproved that the Burau map from braids to matrices is not injective. In \cite{MR2191942}, the introduction of virtual quandles and virtual biquandles  extends the structure of a quandle and a biquandle, respectively, by incorporating an invertible unary map. This unary operator plays a crucial role in defining the action of virtual crossings on the labels of incoming arcs in a virtual knot diagram. Leveraging this unary operator, numerous invariants originating from biquandles are extended to virtual biquandles, thereby enhancing their strength. For instance, a notion of virtual knot group is defined \cite{MR1916950}, whose various generalizations are explored in \cite{MR3640614}. Furthermore, state-sum invariants are defined for virtual knots using virtual biquandles in \cite{MR3982044, 
MR2515811}.

 In this article, we found a connection between the coloring number invariant derived from biquandles and virtual biquandles. We prove that, for a given virtual link $L$ and a virtual biquandle $(X,f,R)$, the set of colorings of $L$ by $(X,f,R)$ is in bijection with the set of colorings of $L$ by a biquandle $(X,VR)$, where $VR$ is a new operator defined on $X$ (see Theorem \ref{thm:number_of_colorings_are_same}) and defines a functor from the category of virtual biquandles to the category of biquandles. As a result, in Corollary \ref{cor:cor_number_of_colorings_are_same}, we prove that the number of colorings of $L$ by virtual biquandles can be recovered from colorings by biquandles. Furthermore, by introducing a new set of labeling rules (see Figure \ref{fig:new_labeling_virtual_crossing_rules}), we construct a presentation of the associated fundamental virtual biquandle of $L$ using only the relations coming from the classical crossings (see Theorem \ref{thm:virtual_biquandle_isomorphism}). This is a notable improvement over the conventional approach, which typically involves recording relations from both classical and virtual crossings. Thus, with the adoption of these new labeling rules, one can directly extract the fundamental virtual biquandle of $L$ from its Gauss diagram (using rules depicted in Figure \ref{fig:depiction_of_labeling_rules_on_Gauss_diagrms}), which was not the case before.

This article is organized as follows.  In Section~\ref{pre} we revisit the fundamentals of virtual knot theory. Specifically, we review the concept of equivalence of virtual link diagrams,  the definition of the fundamental virtual biquandle of a virtual link, and the colorings of virtual links using virtual biquandles. Additionally, we note that for a given virtual biquandle $(X,f,R)$, there exists an operator $VR$ distinct from $R$, for which $(X,VR)$ forms a biquandle.

In Section~\ref{Col}, we recall the definition of the virtual braid group $VB_n$ and define two representations of $VB_n$ to the automorphism group $\Aut(X^n)$  of a virtual biquandle $(X, f, R)$. In Theorem \ref{thm:number_of_colorings_are_same}, we prove that if $L$ is a virtual link, then the set of colorings of $L$ by a virtual biquandle $(X,f,R)$, following the rules illustrated in Figure \ref{fig:old_virtual_crossing_rules}, is in bijection with the set of colorings of $L$ by $(X,VR)$ under the rules depicted in Figure \ref{fig:biquandle_crossing_rules}. In particular, we show that the coloring number of a virtual link  $L$ by virtual biquandles can be retrieved from colorings by biquandles.

In Section~\ref{Relation}, we present two representations of the virtual braid group $VB_n$ to the automorphism group of free virtual biquandle on a set of $n$ elements. By employing  these representations, we prove that for a given virtual link $L$, the fundamental virtual biquandle $(VBQ(L), f, R)$ is isomorphic to $(VBQ(L), f, VR)$ as virtual biquandles (refer to Theorem \ref{thm:virtual_biquandle_isomorphism}). This implies that we can construct a presentation of the fundamental virtual biquandle of $L$ simply by recording the relations arising at classical crossing, using the new labeling rules shown in Figure \ref{fig:new_labeling_virtual_crossing_rules}. Consequently, $(VBQ(L), f, R)$ can be recovered from a Gauss diagram representing $L$, using the rules shown in Figure \ref{fig:depiction_of_labeling_rules_on_Gauss_diagrms}.

In Section \ref{sec:conclusion}, we discuss potential directions stemming from this work.

\section{Preliminaries}\label{pre}
A non-empty set $X$ with an operator $R\colon X \times X \to X \times X$, mapping $(x,y) \mapsto (R_1(x,y), R_2(x,y) )$, is called a {\it biquandle} if the following holds:
\begin{enumerate}
\item $R$ satisfies the Yang-Baxter relation, that is,
\begin{align}
(R \times \id) (\id \times R) (R \times \id)=(\id \times R) (R \times \id) (\id \times R),\label{eq:YBE}
\end{align}
where $\id$ is the identity map on $X$;
\item $R$ is invertible;
\item $R_1$ is {\it left-invertible}, meaning that for any $x,z \in X$, there exists a unique $y \in X$ such that $R_1(x,y)=z$;
\item $R_2$ is {\it right-invertible}, meaning that for any $y, w \in X$, there exists a unique $x \in X$ such that $R_2(x,y)=w$;
\item $R$ satisfies the {\it type I condition}, which is, for any $a \in X$, there exists a unique $x \in X$ such that $R(x,a)=(x,a)$.
\end{enumerate}

Note that the type I condition is equivalent to the following: for given $a \in X$, there exists a unique $y \in X$ such that $R(a, y)=(a,y)$.

The following are some examples.
\begin{example}\label{Wada}{\rm
	Let $G$ be a group and let $R\colon G \times G \to G \times G$ be a map sending $(x,y)$ to $(x^{-1}y^{-1}x, y^2x )$, then $(G,R)$ is a biquandle called {\it Wada biquandle}.  For more examples of biquandles arising from groups, refer to \cite{MR1167178}.}
\end{example}

\begin{example} \label{YBEexamples} {\rm 
		Let $k$ be a commutative ring with unity $1$, and units $\alpha$ and $\beta$, such that $(1-\alpha)(1-\beta)=0$. 
		Then   $\displaystyle R= \left[ \begin{array}{cc} 1-\alpha & \alpha \\ \beta & 1-\beta \end{array} \right] $ makes the ring $k$ into a biquandle.   This example can be generalized by replacing $k$ with $k^m$.  Precisely, let $R\colon k^m \times k^m \rightarrow  k^m \times k^m $ be given by a matrix 
		$\displaystyle R= \left[ \begin{array}{cc} I - A & A \\ B & I-B \end{array} \right], $
		where the matrices $A$ and $B$ are invertible $m \times m$ matrices with elements in $k$, satisfying $AB=BA$ and $(I-A)(I-B)=0$, where $I$ is the identity matrix.  Then  $(k^m,R)$ is a biquandle. 
		
} \end{example}

We revisit the definition of virtual biquandles from \cite{MR2191942}. A {\it virtual biquandle} is a triplet $(X,f,R)$, where $(X,R)$ is a biquandle and $f: X \to X$ is an automorphism of $(X,R)$.

The following are straightforward examples of virtual biquandles.
\begin{example}\label{Vwada}
{\rm	Consider the Wada biquandle  $(G,R)$ as described in Example \ref{Wada}.  If $f$ is a group automorphism $f:G \to G$, then the $(G,f,R)$ forms a virtual biquandle.}
\end{example}

\begin{example}\label{Vlinear}
{\rm	Consider the biquandle $(k,R)$ defined in Example \ref{YBEexamples}. If $f$ is an automorphism of the ring $k$, then $(k,f,R)$ forms a virtual biquandle.
}
\end{example}

A {\it homomorphism} from $(X,f,R)$ to $(Y,f',R')$ is a map $\phi: (X,f,R) \to (Y,f', R')$ such that $\phi:(X,R) \to (Y, R')$ is a biquandle homomorphism and $\phi \circ f= f' \circ \phi$.

The proof of Proposition \ref{prop:twisted_biquandle} can be found in \cite{2024arXiv240101533E}; however, for the sake of completeness, we recall it here.

\begin{proposition}\label{prop:twisted_biquandle}
If $(X,f,R)$ is a virtual biquandle, then $(X,VR)$ is a biquandle, where $VR\colon X \times X \to X \times X$ defined as $VR(x,y)=(R_1(x,f(y)), R_2(f^{-1}(x),y))$. Moreover, $f$ is also an automorphism of $(X,VR)$.
\end{proposition}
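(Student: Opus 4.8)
The plan is to recognize $VR$ as a \emph{conjugate} of $R$ and then transport each biquandle axiom across the conjugation. The key first step is to establish the two factorizations
\[
VR \;=\; (f\times\id)\circ R\circ (f^{-1}\times\id) \;=\; (\id\times f^{-1})\circ R\circ (\id\times f),
\]
which I would derive from the hypothesis that $f$ is an automorphism of $(X,R)$, i.e. that $(f\times f)\circ R = R\circ (f\times f)$, equivalently $R_1(f(a),f(b))=f(R_1(a,b))$ and $R_2(f(a),f(b))=f(R_2(a,b))$. Setting $a=f^{-1}(x)$, $b=y$ gives the componentwise identities $R_1(x,f(y))=f\big(R_1(f^{-1}(x),y)\big)$ and $R_2(f^{-1}(x),y)=f^{-1}\big(R_2(x,f(y))\big)$; substituting the first into the definition of $VR$ yields the first factorization and the second into the definition yields the second. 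From the first factorization, invertibility of $VR$ (axiom (2)) is immediate, as it is a composite of the bijections $f^{\pm1}\times\id$ and $R$.

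For the three remaining one-sided axioms I would argue from the componentwise formula $VR(x,y)=\big(R_1(x,f(y)),\,R_2(f^{-1}(x),y)\big)$. Left-invertibility of $(VR)_1$ reduces to that of $R_1$: given $x,z$, first solve $R_1(x,w)=z$ uniquely for $w$, then set $y=f^{-1}(w)$, which is the unique preimage since $f$ is a bijection. Right-invertibility of $(VR)_2$ is symmetric, using right-invertibility of $R_2$ and the bijection $f$. For the type~I condition I would use the first factorization to observe that $VR(x,a)=(x,a)$ holds if and only if $R\big(f^{-1}(x),a\big)=\big(f^{-1}(x),a\big)$; hence the unique type~I fixed element $x'$ for $R$ at level $a$ transports via $x=f(x')$ to the unique type~I fixed element for $VR$.

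The main obstacle is the Yang--Baxter equation (axiom (1)), since $VR$ is conjugate to $R$ by $f\times\id$ rather than by a diagonal $f\times f$, so invariance of the Yang--Baxter relation is not automatic. My device is to pass to $X\times X\times X$ and exhibit a single bijection $\Phi:=f\times\id\times f^{-1}$ that simultaneously conjugates both factors:
\[
VR\times\id = \Phi\,(R\times\id)\,\Phi^{-1}, \qquad \id\times VR = \Phi\,(\id\times R)\,\Phi^{-1}.
\]
The first identity follows because $\id\times\id\times f^{-1}$ commutes with $R\times\id$ (they act on different factors of $X\times X\times X$) together with the first factorization of $VR$; the second follows because $f\times\id\times\id$ commutes with $\id\times R$ together with the second factorization. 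Granting these, the Yang--Baxter relation for $VR$ drops out of the Yang--Baxter relation for $R$: the cancelling $\Phi^{-1}\Phi$ pairs leave $\Phi\,[(R\times\id)(\id\times R)(R\times\id)]\,\Phi^{-1}$ on one side and $\Phi\,[(\id\times R)(R\times\id)(\id\times R)]\,\Phi^{-1}$ on the other, and these agree.

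Finally, to see that $f$ is an automorphism of $(X,VR)$, I would verify $(f\times f)\circ VR = VR\circ(f\times f)$ by a one-line computation from the second factorization, namely $(f\times f)\,VR=(f\times\id)\,R\,(\id\times f)=VR\,(f\times f)$, using that $R$ commutes with $f\times f$; bijectivity of $f$ is given. I expect the only genuinely delicate point to be the bookkeeping in the Yang--Baxter step; every other axiom is a direct transport along the conjugation $VR=(f\times\id)\,R\,(f^{-1}\times\id)$.
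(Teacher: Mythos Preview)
Your proof is correct and takes a genuinely different route from the paper's. The paper verifies only the Yang--Baxter relation (stating ``it is sufficient'' for the rest) and does so by a direct coordinate expansion: it writes out both sides of $(VR\times\id)(\id\times VR)(VR\times\id)=(\id\times VR)(VR\times\id)(\id\times VR)$ componentwise, arrives at three scalar identities, and observes that each is an instance of the Yang--Baxter relation for $R$ after a substitution. Your argument instead identifies the two factorizations $VR=(f\times\id)R(f^{-1}\times\id)=(\id\times f^{-1})R(\id\times f)$ and then finds a single conjugator $\Phi=f\times\id\times f^{-1}$ on $X^3$ that simultaneously intertwines $R\times\id$ with $VR\times\id$ and $\id\times R$ with $\id\times VR$; the Yang--Baxter relation for $VR$ is then literally the $\Phi$-conjugate of the one for $R$. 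This is cleaner bookkeeping and explains \emph{why} the identities in the paper's computation line up. Your approach also buys a uniform treatment of the remaining axioms (invertibility, one-sided invertibility, type~I, and the ``moreover'' clause), which the paper leaves implicit; conversely, the paper's brute-force expansion has the advantage of requiring no auxiliary structural observations. One small remark: in your last line the appeal to ``$R$ commutes with $f\times f$'' is already baked into the two factorizations you use, so it need not be invoked separately.
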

\begin{proof}
It is sufficient to prove that $VR$ is a Yang-Baxter operator.

L.H.S of Equation \eqref{eq:YBE} is
\begin{align*}
(VR \times \id ) (\id \times VR) (VR \times \id)~~(x,y,z)=&(VR \times \id) (\id \times VR)~~ \big( R_1(x,f(y)), R_2(f^{-1}(x),y), z \big)\\
=&(VR \times \id ) ~~ \big( R_1(x,f(y)), R_1(  R_2 (f^{-1}(x), y), f(z) ),\\& R_2 ( R_2 (f^{-2}(x), f^{-1}(y), z) \big)\\
=&\big( R_1 \big( R_1(x,f(y), R_1(R_2(x,f(y)), f^2(z) ) \big), \\&R_2 \big( R_1 (f^{-1}(x), y), R_1 (R_2(f^{-1}(x),y), f(z)\big),\\& R_2 ( R_2 (f^{-2}(x), f^{-1}(y), z) \big).
\end{align*}

R.H.S of Equation \eqref{eq:YBE} is
\begin{align*}
(\id \times VR)
 (VR \times \id) 
(\id \times VR)~~(x,y,z)=&(\id \times VR) 
 (VR \times \id)
 ~~\big( x, R_1(y,f(z)), R_2 (f^{-1}(y),z) \big)\\
=&(\id \times VR) ~~R_1\big( x, R_1(f(y), f^2(z)), R_2(f^{-1}(x), R_1(y, f(z))),\\& R_2(f^{-1}(y),z) \big)\\
=& \big( R_1(x, R_1(f(y), f^2(z))), R_1(R_2(f^{-1}(x), R_1(y, f(z))), R_2(y, f(z)) \big),\\
&R_2(R_2(f^{-2}(x), R_1(f^{-1}(y),z)), R_2(f^{-1}(y),z)) \big).
\end{align*}
Now L.H.S=R.H.S, if and only if the following equations hold for all $x,y,z \in X$:
\begin{align}
R_1 \big( R_1(x,f(y), R_1(R_2(x,f(y)), f^2(z) ) \big)= R_1\big(x, R_1(f(y), f^2(z))\big) \label{eq:TR_III_1}\\
R_2 \big( R_1 (f^{-1}(x), y), R_1 (R_2(f^{-1}(x),y), f(z)\big)= R_1\big(R_2(f^{-1}(x), R_1(y, f(z))\big), R_2(y,f(z))\big) \label{eq:TR_III_2}\\
R_2 \big( R_2 (f^{-2}(x), f^{-1}(y), z\big) =R_2\big(R_2(f^{-2}(x), R_1(f^{-1}(y),z)), R_2(f^{-1}(y),z)\big). \label{eq:TR_III_3}
\end{align}
Equations \ref{eq:TR_III_1}, \ref{eq:TR_III_2} and \ref{eq:TR_III_3} hold because $R$ is a Yang-Baxter operator.
\end{proof}

It can easily be verified that the construction of a biquandle $(X, VR)$ from the virtual biquandle $(X,f,R)$ in Proposition \ref{prop:twisted_biquandle} is functorial.
Moreover, from Proposition \ref{prop:twisted_biquandle} it follows that $(X,f, VR)$ is a virtual biquandle.

We recall the following notions of virtual knot theory from \cite{MR1721925}.

A {\it virtual link diagram} is a generic immersion of a finitely many circles into the Euclidean plane $\mathbb{R}^2$ or the $2$-sphere $\mathbb{S}^2$, where the double points are decorated either as classical crossings or virtual crossings, as shown in Figure \ref{fig:virtual_crossing}.
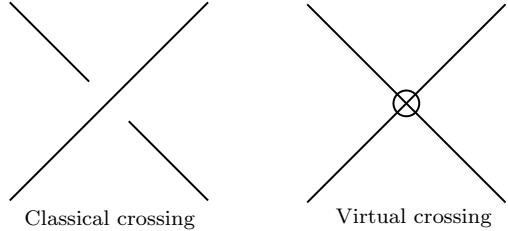
\begin{figure}[H]

\tikzset{every picture/.style={line width=0.75pt}} 

\begin{tikzpicture}[x=0.75pt,y=0.75pt,yscale=-1,xscale=1]

\draw    (450,51) -- (550,151) ;
\draw    (550,51) -- (450,151) ;
\draw   (493.5,101) .. controls (493.5,97.41) and (496.41,94.5) .. (500,94.5) .. controls (503.59,94.5) and (506.5,97.41) .. (506.5,101) .. controls (506.5,104.59) and (503.59,107.5) .. (500,107.5) .. controls (496.41,107.5) and (493.5,104.59) .. (493.5,101) -- cycle ;
\draw    (300,50) -- (340,90) ;
\draw    (360,110) -- (400,150) ;
\draw    (300,150) -- (400,50) ;

\draw (306,153.4) node [anchor=north west][inner sep=0.75pt]  [font=\scriptsize]  {Classical\ crossing};
\draw (463,152.4) node [anchor=north west][inner sep=0.75pt]  [font=\scriptsize]  {Virtual\ crossing};

\end{tikzpicture}

\caption{Classical crossing and Virtual crossing.} \label{fig:virtual_crossing}
\end{figure}

Two virtual link diagrams are said to be {\it equivalent} if they are related by a finite sequence of planar isotopies and {\it generalized Reidemeister moves} shown in Figure \ref{fig:g_reidemeister_moves}. A {\it virtual link} is an equivalence of virtual link diagrams under  the generalized Reidemeister moves.

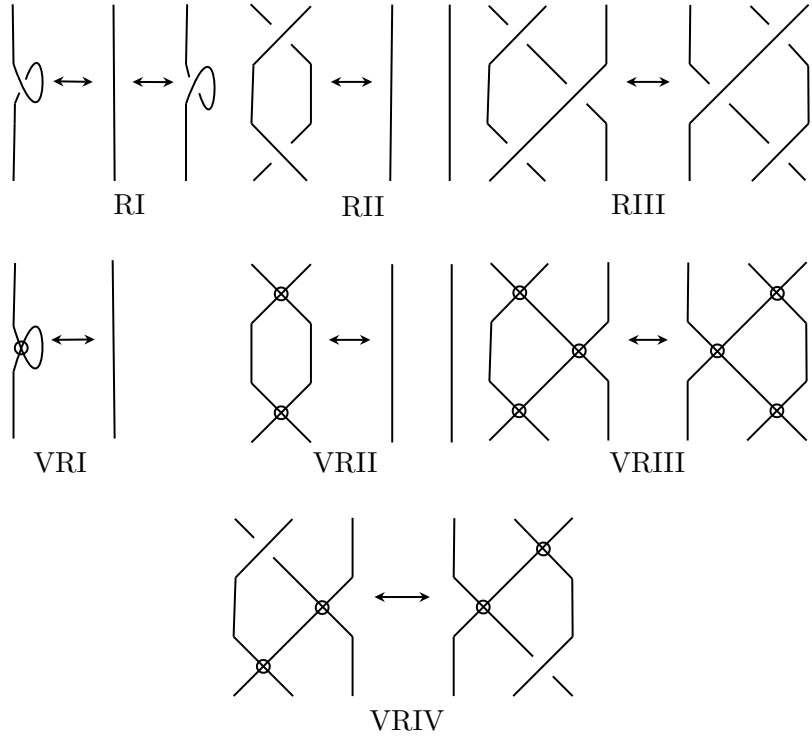
\begin{figure}[H]

\tikzset{every picture/.style={line width=0.75pt}} 

\begin{tikzpicture}[x=0.75pt,y=0.75pt,yscale=-1,xscale=1]

\draw [line width=0.75]    (89.58,10.83) -- (90,41) ;
\draw [line width=0.75]    (90,41) .. controls (108.71,94.57) and (108.09,15.81) .. (96.14,48.86) ;
\draw [line width=0.75]    (90.71,60.86) -- (90.58,70.83) ;
\draw [line width=0.75]    (90.58,70.83) -- (90,100) ;
\draw [line width=0.75]    (140.43,11) -- (141,100) ;
\draw [line width=0.75]    (209.6,10.8) -- (220,21) ;
\draw [line width=0.75]    (240,11.8) -- (210.6,41.8) ;
\draw [line width=0.75]    (230,31) -- (240,41) ;
\draw [line width=0.75]    (240,71) -- (230,81) ;
\draw [line width=0.75]    (211,42) -- (210,71) ;
\draw [line width=0.75]    (238,99.9) -- (210,71) ;
\draw [line width=0.75]    (220,91) -- (211,99.9) ;
\draw [line width=0.75]    (281,11) -- (280,100) ;
\draw [line width=0.75]    (329.67,11.5) -- (339.33,21.17) ;
\draw [line width=0.75]    (358.67,11.67) -- (330,41.17) ;
\draw [line width=0.75]    (389,11) -- (389,41) ;
\draw [line width=0.75]    (349,31) -- (369,51) ;
\draw [line width=0.75]    (389,41) -- (330,100.1) ;
\draw [line width=0.75]    (379,61) -- (389,71) ;
\draw [line width=0.75]    (389,71) -- (389,99.9) -- (389,100) ;
\draw [line width=0.75]    (330,41.17) -- (329,71) ;
\draw [line width=0.75]    (349,91) -- (358.2,100.1) ;
\draw [line width=0.75]    (329,71) -- (339,81) ;
\draw [line width=0.75]    (431.33,11.17) -- (431,41) ;
\draw [line width=0.75]    (481,31) -- (490.8,41.8) ;
\draw [line width=0.75]    (491,11) -- (431,71) ;
\draw [line width=0.75]    (431,71) -- (431,100) ;
\draw [line width=0.75]    (431,41) -- (441,51) ;
\draw [line width=0.75]    (451,61) -- (471,81) ;
\draw [line width=0.75]    (490.8,41.8) -- (491,71) ;
\draw [line width=0.75]    (491,71) -- (462.4,100.1) ;
\draw [line width=0.75]    (481,91) -- (489.4,99.7) ;
\draw [line width=0.75]    (461.67,11.67) -- (471,21) ;
\draw [line width=0.75]    (93,55.43) -- (90.71,60.86) ;
\draw [line width=0.75]    (177,100) -- (177,61) ;
\draw [line width=0.75]    (177,61) .. controls (196,8) and (194,88.67) .. (183.67,55.67) ;
\draw [line width=0.75]    (177,41) -- (177.58,10.83) ;
\draw [line width=0.75]    (177,41) ;
\draw [line width=0.75]    (177,41) -- (178.67,47.67) ;
\draw [line width=0.75]    (113,49.72) -- (127,49.95) ;
\draw [shift={(130,50)}, rotate = 180.95] [fill={rgb, 255:red, 0; green, 0; blue, 0 }  ][line width=0.08]  [draw opacity=0] (5.36,-2.57) -- (0,0) -- (5.36,2.57) -- (3.56,0) -- cycle    ;
\draw [shift={(110,49.67)}, rotate = 0.95] [fill={rgb, 255:red, 0; green, 0; blue, 0 }  ][line width=0.08]  [draw opacity=0] (5.36,-2.57) -- (0,0) -- (5.36,2.57) -- (3.56,0) -- cycle    ;
\draw [line width=0.75]    (153,50.02) -- (167.75,50.11) ;
\draw [shift={(170.75,50.13)}, rotate = 180.35] [fill={rgb, 255:red, 0; green, 0; blue, 0 }  ][line width=0.08]  [draw opacity=0] (5.36,-2.57) -- (0,0) -- (5.36,2.57) -- (3.56,0) -- cycle    ;
\draw [shift={(150,50)}, rotate = 0.35] [fill={rgb, 255:red, 0; green, 0; blue, 0 }  ][line width=0.08]  [draw opacity=0] (5.36,-2.57) -- (0,0) -- (5.36,2.57) -- (3.56,0) -- cycle    ;
\draw [line width=0.75]    (253,50) -- (268,50) ;
\draw [shift={(271,50)}, rotate = 180] [fill={rgb, 255:red, 0; green, 0; blue, 0 }  ][line width=0.08]  [draw opacity=0] (5.36,-2.57) -- (0,0) -- (5.36,2.57) -- (3.56,0) -- cycle    ;
\draw [shift={(250,50)}, rotate = 0] [fill={rgb, 255:red, 0; green, 0; blue, 0 }  ][line width=0.08]  [draw opacity=0] (5.36,-2.57) -- (0,0) -- (5.36,2.57) -- (3.56,0) -- cycle    ;
\draw [line width=0.75]    (402,50) -- (418,50) ;
\draw [shift={(421,50)}, rotate = 180] [fill={rgb, 255:red, 0; green, 0; blue, 0 }  ][line width=0.08]  [draw opacity=0] (5.36,-2.57) -- (0,0) -- (5.36,2.57) -- (3.56,0) -- cycle    ;
\draw [shift={(399,50)}, rotate = 0] [fill={rgb, 255:red, 0; green, 0; blue, 0 }  ][line width=0.08]  [draw opacity=0] (5.36,-2.57) -- (0,0) -- (5.36,2.57) -- (3.56,0) -- cycle    ;
\draw [line width=0.75]    (310,11) -- (310,100) ;
\draw [line width=0.75]    (240,41) -- (240,71) ;
\draw [line width=0.75]    (90.8,141.4) -- (90,173.25) ;
\draw [line width=0.75]    (90,173.25) .. controls (109.5,237.75) and (109.5,129.25) .. (90,196.25) ;
\draw [line width=0.75]    (90,196.25) -- (90,230) ;
\draw [line width=0.75]    (140,140) -- (141,230) ;
\draw [line width=0.75]    (210,142) -- (240,172) ;
\draw [line width=0.75]    (240,172) -- (240,202) ;
\draw [line width=0.75]    (240,202) -- (210,232) ;
\draw [line width=0.75]    (240,142) -- (210,172) ;
\draw [line width=0.75]    (210,172) -- (210,202) ;
\draw [line width=0.75]    (210,202) -- (240,232) ;
\draw [line width=0.75]    (281,142) -- (281,232) ;
\draw [line width=0.75]    (311,142) -- (311,232) ;
\draw [line width=0.75]    (330.67,141.5) -- (390,201) ;
\draw [line width=0.75]    (359.67,141.67) -- (331,171.17) ;
\draw [line width=0.75]    (390,171) -- (330,231) ;
\draw [line width=0.75]    (390,201) -- (390,231) ;
\draw [line width=0.75]    (331,171.17) -- (330,201) ;
\draw [line width=0.75]    (330,201) -- (360,231) ;
\draw [line width=0.75]    (430.33,141.17) -- (430,171) ;
\draw [line width=0.75]    (490,141) -- (430,201) ;
\draw [line width=0.75]    (430,201) -- (430,231) ;
\draw [line width=0.75]    (489.8,171.8) -- (490,201) ;
\draw [line width=0.75]    (490,201) -- (460,231) ;
\draw [line width=0.75]    (430,171) -- (490,231) ;
\draw [line width=0.75]    (460.67,141.67) -- (489.8,171.8) ;
\draw [line width=0.75]    (403,180.2) -- (417,180.2) ;
\draw [shift={(420,180.2)}, rotate = 180] [fill={rgb, 255:red, 0; green, 0; blue, 0 }  ][line width=0.08]  [draw opacity=0] (5.36,-2.57) -- (0,0) -- (5.36,2.57) -- (3.56,0) -- cycle    ;
\draw [shift={(400,180.2)}, rotate = 0] [fill={rgb, 255:red, 0; green, 0; blue, 0 }  ][line width=0.08]  [draw opacity=0] (5.36,-2.57) -- (0,0) -- (5.36,2.57) -- (3.56,0) -- cycle    ;
\draw  [line width=0.75]  (92.93,181.16) .. controls (94.64,180.62) and (96.45,181.57) .. (96.99,183.28) .. controls (97.52,184.99) and (96.57,186.81) .. (94.86,187.34) .. controls (93.16,187.87) and (91.34,186.92) .. (90.8,185.22) .. controls (90.27,183.51) and (91.22,181.69) .. (92.93,181.16) -- cycle ;
\draw  [line width=0.75]  (224.03,153.91) .. controls (225.74,153.37) and (227.56,154.32) .. (228.09,156.03) .. controls (228.63,157.74) and (227.68,159.56) .. (225.97,160.09) .. controls (224.26,160.63) and (222.44,159.68) .. (221.91,157.97) .. controls (221.37,156.26) and (222.32,154.44) .. (224.03,153.91) -- cycle ;
\draw  [line width=0.75]  (224.03,213.91) .. controls (225.74,213.37) and (227.56,214.32) .. (228.09,216.03) .. controls (228.63,217.74) and (227.68,219.56) .. (225.97,220.09) .. controls (224.26,220.63) and (222.44,219.68) .. (221.91,217.97) .. controls (221.37,216.26) and (222.32,214.44) .. (224.03,213.91) -- cycle ;
\draw  [line width=0.75]  (344.37,153.32) .. controls (346.07,152.79) and (347.89,153.74) .. (348.43,155.45) .. controls (348.96,157.16) and (348.01,158.97) .. (346.3,159.51) .. controls (344.59,160.04) and (342.78,159.09) .. (342.24,157.38) .. controls (341.71,155.68) and (342.66,153.86) .. (344.37,153.32) -- cycle ;
\draw  [line width=0.75]  (374.37,182.66) .. controls (376.07,182.12) and (377.89,183.07) .. (378.43,184.78) .. controls (378.96,186.49) and (378.01,188.31) .. (376.3,188.84) .. controls (374.59,189.38) and (372.78,188.43) .. (372.24,186.72) .. controls (371.71,185.01) and (372.66,183.19) .. (374.37,182.66) -- cycle ;
\draw  [line width=0.75]  (344.03,212.91) .. controls (345.74,212.37) and (347.56,213.32) .. (348.09,215.03) .. controls (348.63,216.74) and (347.68,218.56) .. (345.97,219.09) .. controls (344.26,219.63) and (342.44,218.68) .. (341.91,216.97) .. controls (341.37,215.26) and (342.32,213.44) .. (344.03,212.91) -- cycle ;
\draw  [line width=0.75]  (444.03,182.66) .. controls (445.74,182.12) and (447.56,183.07) .. (448.09,184.78) .. controls (448.63,186.49) and (447.68,188.31) .. (445.97,188.84) .. controls (444.26,189.38) and (442.44,188.43) .. (441.91,186.72) .. controls (441.37,185.01) and (442.32,183.19) .. (444.03,182.66) -- cycle ;
\draw  [line width=0.75]  (474.27,153.64) .. controls (475.97,153.11) and (477.79,154.06) .. (478.33,155.77) .. controls (478.86,157.47) and (477.91,159.29) .. (476.2,159.83) .. controls (474.49,160.36) and (472.68,159.41) .. (472.14,157.7) .. controls (471.61,155.99) and (472.56,154.18) .. (474.27,153.64) -- cycle ;
\draw  [line width=0.75]  (474.03,212.91) .. controls (475.74,212.37) and (477.56,213.32) .. (478.09,215.03) .. controls (478.63,216.74) and (477.68,218.56) .. (475.97,219.09) .. controls (474.26,219.63) and (472.44,218.68) .. (471.91,216.97) .. controls (471.37,215.26) and (472.32,213.44) .. (474.03,212.91) -- cycle ;
\draw [line width=0.75]    (390,141) -- (390,171) ;
\draw [line width=0.75]    (252,180.2) -- (267,180.2) ;
\draw [shift={(270,180.2)}, rotate = 180] [fill={rgb, 255:red, 0; green, 0; blue, 0 }  ][line width=0.08]  [draw opacity=0] (5.36,-2.57) -- (0,0) -- (5.36,2.57) -- (3.56,0) -- cycle    ;
\draw [shift={(249,180.2)}, rotate = 0] [fill={rgb, 255:red, 0; green, 0; blue, 0 }  ][line width=0.08]  [draw opacity=0] (5.36,-2.57) -- (0,0) -- (5.36,2.57) -- (3.56,0) -- cycle    ;
\draw [line width=0.75]    (112,180.17) -- (128,180.03) ;
\draw [shift={(131,180)}, rotate = 179.48] [fill={rgb, 255:red, 0; green, 0; blue, 0 }  ][line width=0.08]  [draw opacity=0] (5.36,-2.57) -- (0,0) -- (5.36,2.57) -- (3.56,0) -- cycle    ;
\draw [shift={(109,180.2)}, rotate = 359.48] [fill={rgb, 255:red, 0; green, 0; blue, 0 }  ][line width=0.08]  [draw opacity=0] (5.36,-2.57) -- (0,0) -- (5.36,2.57) -- (3.56,0) -- cycle    ;
\draw    (201.67,270.5) -- (211.33,280.17) ;
\draw    (230.67,270.67) -- (202,300.17) ;
\draw    (261,270) -- (261,300) ;
\draw    (221,290) -- (261,330) ;
\draw    (261,300) -- (201,360) ;
\draw    (261,330) -- (261,360) ;
\draw    (202,300.17) -- (201,330) ;
\draw    (201,330) -- (231,360) ;
\draw    (312.33,270.17) -- (312,300) ;
\draw    (372,270) -- (312,330) ;
\draw    (312,330) -- (312,360) ;
\draw    (312,300) -- (352,340) ;
\draw    (371.8,300.8) -- (372,330) ;
\draw    (372,330) -- (342,360) ;
\draw    (362,350) -- (372,360) ;
\draw    (342.67,270.67) -- (371.8,300.8) ;
\draw    (275,310) -- (283.33,310) -- (297,310) ;
\draw [shift={(300,310)}, rotate = 180] [fill={rgb, 255:red, 0; green, 0; blue, 0 }  ][line width=0.08]  [draw opacity=0] (5.36,-2.57) -- (0,0) -- (5.36,2.57) -- (3.56,0) -- cycle    ;
\draw [shift={(272,310)}, rotate = 0] [fill={rgb, 255:red, 0; green, 0; blue, 0 }  ][line width=0.08]  [draw opacity=0] (5.36,-2.57) -- (0,0) -- (5.36,2.57) -- (3.56,0) -- cycle    ;
\draw   (356.27,282.64) .. controls (357.97,282.11) and (359.79,283.06) .. (360.33,284.77) .. controls (360.86,286.47) and (359.91,288.29) .. (358.2,288.83) .. controls (356.49,289.36) and (354.68,288.41) .. (354.14,286.7) .. controls (353.61,284.99) and (354.56,283.18) .. (356.27,282.64) -- cycle ;
\draw   (325.93,311.97) .. controls (327.64,311.44) and (329.46,312.39) .. (329.99,314.1) .. controls (330.53,315.81) and (329.58,317.62) .. (327.87,318.16) .. controls (326.16,318.69) and (324.34,317.74) .. (323.81,316.03) .. controls (323.27,314.33) and (324.22,312.51) .. (325.93,311.97) -- cycle ;
\draw   (215.03,341.91) .. controls (216.74,341.37) and (218.56,342.32) .. (219.09,344.03) .. controls (219.63,345.74) and (218.68,347.56) .. (216.97,348.09) .. controls (215.26,348.63) and (213.44,347.68) .. (212.91,345.97) .. controls (212.37,344.26) and (213.32,342.44) .. (215.03,341.91) -- cycle ;
\draw   (244.7,312.24) .. controls (246.41,311.71) and (248.22,312.66) .. (248.76,314.37) .. controls (249.29,316.07) and (248.34,317.89) .. (246.63,318.43) .. controls (244.93,318.96) and (243.11,318.01) .. (242.57,316.3) .. controls (242.04,314.59) and (242.99,312.78) .. (244.7,312.24) -- cycle ;

\draw (139,105.4) node [anchor=north west][inner sep=0.75pt]  [font=\normalsize]  {$\mathrm{RI}$};
\draw (254,106.4) node [anchor=north west][inner sep=0.75pt]    {$\mathrm{RII}$};
\draw (390,105.4) node [anchor=north west][inner sep=0.75pt]    {$\mathrm{RIII}$};
\draw (98.5,235.4) node [anchor=north west][inner sep=0.75pt]    {$\mathrm{VRI}$};
\draw (239.5,235.4) node [anchor=north west][inner sep=0.75pt]    {$\mathrm{VRII}$};
\draw (389,235.4) node [anchor=north west][inner sep=0.75pt]    {$\mathrm{VRIII}$};
\draw (268,365.4) node [anchor=north west][inner sep=0.75pt]    {$\mathrm{VRIV}$};

\end{tikzpicture}
\caption{Generalized Reidemeister moves.} \label{fig:g_reidemeister_moves}
\end{figure}

In an oriented virtual link diagram, a classical crossing is {\it positive} if the co-orientation of the over-arc and that of the under-arc match with the orientation of the plane, otherwise it is {\it negative}. Positive and negative crossings are depicted in Figure \ref{fig:positive_crossing_and_negative_crossing}.

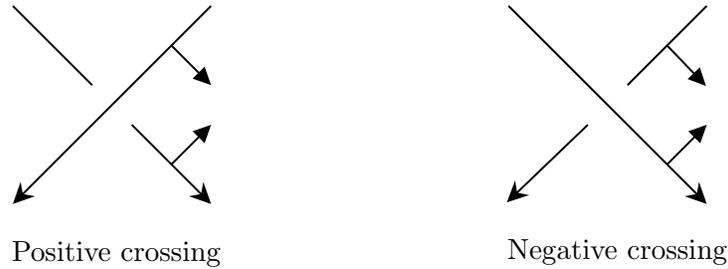
\begin{figure}[H]

\tikzset{every picture/.style={line width=0.75pt}} 

\begin{tikzpicture}[x=0.75pt,y=0.75pt,yscale=-1,xscale=1]

\draw    (50,50) -- (90,90) ;
\draw    (150,50) -- (52.12,147.88) ;
\draw [shift={(50,150)}, rotate = 315] [fill={rgb, 255:red, 0; green, 0; blue, 0 }  ][line width=0.08]  [draw opacity=0] (10.72,-5.15) -- (0,0) -- (10.72,5.15) -- (7.12,0) -- cycle    ;
\draw    (110,110) -- (147.88,147.88) ;
\draw [shift={(150,150)}, rotate = 225] [fill={rgb, 255:red, 0; green, 0; blue, 0 }  ][line width=0.08]  [draw opacity=0] (10.72,-5.15) -- (0,0) -- (10.72,5.15) -- (7.12,0) -- cycle    ;
\draw    (130,70) -- (147.88,87.88) ;
\draw [shift={(150,90)}, rotate = 225] [fill={rgb, 255:red, 0; green, 0; blue, 0 }  ][line width=0.08]  [draw opacity=0] (8.93,-4.29) -- (0,0) -- (8.93,4.29) -- cycle    ;
\draw    (130,130) -- (147.88,112.12) ;
\draw [shift={(150,110)}, rotate = 135] [fill={rgb, 255:red, 0; green, 0; blue, 0 }  ][line width=0.08]  [draw opacity=0] (8.93,-4.29) -- (0,0) -- (8.93,4.29) -- cycle    ;
\draw    (300,50) -- (397.88,147.88) ;
\draw [shift={(400,150)}, rotate = 225] [fill={rgb, 255:red, 0; green, 0; blue, 0 }  ][line width=0.08]  [draw opacity=0] (10.72,-5.15) -- (0,0) -- (10.72,5.15) -- (7.12,0) -- cycle    ;
\draw    (380,130) -- (397.88,112.12) ;
\draw [shift={(400,110)}, rotate = 135] [fill={rgb, 255:red, 0; green, 0; blue, 0 }  ][line width=0.08]  [draw opacity=0] (8.93,-4.29) -- (0,0) -- (8.93,4.29) -- cycle    ;
\draw    (340,110.01) -- (301.35,147.1) ;
\draw [shift={(299.18,149.18)}, rotate = 316.18] [fill={rgb, 255:red, 0; green, 0; blue, 0 }  ][line width=0.08]  [draw opacity=0] (10.72,-5.15) -- (0,0) -- (10.72,5.15) -- (7.12,0) -- cycle    ;
\draw    (380,70) -- (397.51,88.24) ;
\draw [shift={(399.58,90.41)}, rotate = 226.18] [fill={rgb, 255:red, 0; green, 0; blue, 0 }  ][line width=0.08]  [draw opacity=0] (8.93,-4.29) -- (0,0) -- (8.93,4.29) -- cycle    ;
\draw    (400,50) -- (360,90) ;

\draw (48,167) node [anchor=north west][inner sep=0.75pt]   [align=left] {Positive crossing};
\draw (298,166) node [anchor=north west][inner sep=0.75pt]   [align=left] {Negative crossing};

\end{tikzpicture}
\caption{Positive and negative crossings.} \label{fig:positive_crossing_and_negative_crossing}
\end{figure}

Consider an oriented virtual link $L$, represented by a virtual link diagram $D$. In $D$, a {\it semiarc} is defined as an arc between two consecutive double points. The associated {\it fundamental biquandle} $BQ(L)$ is the quotient of the free biquandle generated by the labels on the set of semiarcs in $D$. This is done by applying the equivalence relation induced by the relations at crossing in $D$, as shown in Figure \ref{fig:biquandle_crossing_rules}. This association is independent of the diagrams used to represent $L$. For more details, we refer the readers to \cite[Section 2.6]{MR2191942} and \cite{MR2384830,MR2100870}.

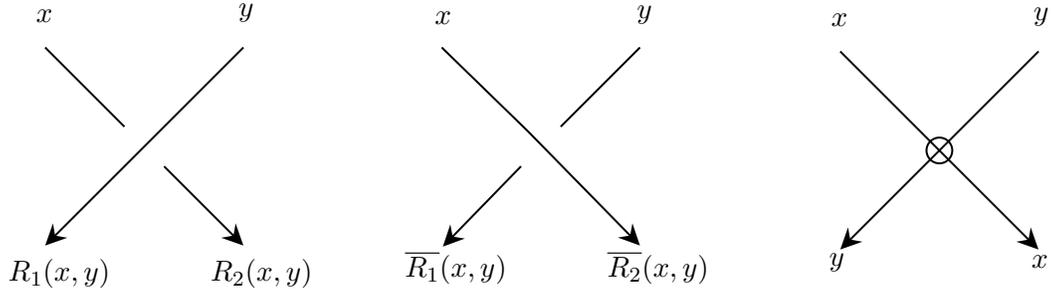
\begin{figure}[H]

\tikzset{every picture/.style={line width=0.75pt}} 

\begin{tikzpicture}[x=0.75pt,y=0.75pt,yscale=-1,xscale=1]

\draw    (50,50) -- (90,90) ;
\draw    (110,110) -- (147.88,147.88) ;
\draw [shift={(150,150)}, rotate = 225] [fill={rgb, 255:red, 0; green, 0; blue, 0 }  ][line width=0.08]  [draw opacity=0] (10.72,-5.15) -- (0,0) -- (10.72,5.15) -- (7.12,0) -- cycle    ;
\draw    (150,50) -- (52.12,147.88) ;
\draw [shift={(50,150)}, rotate = 315] [fill={rgb, 255:red, 0; green, 0; blue, 0 }  ][line width=0.08]  [draw opacity=0] (10.72,-5.15) -- (0,0) -- (10.72,5.15) -- (7.12,0) -- cycle    ;
\draw    (252.65,148.4) -- (290,110) ;
\draw [shift={(250.56,150.55)}, rotate = 314.21] [fill={rgb, 255:red, 0; green, 0; blue, 0 }  ][line width=0.08]  [draw opacity=0] (10.72,-5.15) -- (0,0) -- (10.72,5.15) -- (7.12,0) -- cycle    ;
\draw    (310,90) -- (350,50) ;
\draw    (250,50) -- (293.37,92.19) -- (347.9,147.86) ;
\draw [shift={(350,150)}, rotate = 225.59] [fill={rgb, 255:red, 0; green, 0; blue, 0 }  ][line width=0.08]  [draw opacity=0] (10.72,-5.15) -- (0,0) -- (10.72,5.15) -- (7.12,0) -- cycle    ;
\draw    (451,52) -- (548.88,149.88) ;
\draw [shift={(551,152)}, rotate = 225] [fill={rgb, 255:red, 0; green, 0; blue, 0 }  ][line width=0.08]  [draw opacity=0] (10.72,-5.15) -- (0,0) -- (10.72,5.15) -- (7.12,0) -- cycle    ;
\draw    (551,52) -- (453.12,149.88) ;
\draw [shift={(451,152)}, rotate = 315] [fill={rgb, 255:red, 0; green, 0; blue, 0 }  ][line width=0.08]  [draw opacity=0] (10.72,-5.15) -- (0,0) -- (10.72,5.15) -- (7.12,0) -- cycle    ;
\draw   (494.5,102) .. controls (494.5,98.41) and (497.41,95.5) .. (501,95.5) .. controls (504.59,95.5) and (507.5,98.41) .. (507.5,102) .. controls (507.5,105.59) and (504.59,108.5) .. (501,108.5) .. controls (497.41,108.5) and (494.5,105.59) .. (494.5,102) -- cycle ;

\draw (44,29.4) node [anchor=north west][inner sep=0.75pt]    {$x$};
\draw (146,26.4) node [anchor=north west][inner sep=0.75pt]    {$y$};
\draw (30,155.4) node [anchor=north west][inner sep=0.75pt]    {$R_{1}( x,y)$};
\draw (132,155.4) node [anchor=north west][inner sep=0.75pt]    {$R_{2}( x,y)$};
\draw (245,30.4) node [anchor=north west][inner sep=0.75pt]    {$x$};
\draw (347,27.4) node [anchor=north west][inner sep=0.75pt]    {$y$};
\draw (230,151.4) node [anchor=north west][inner sep=0.75pt]    {$\overline{R_{1}}( x,y)$};
\draw (332,151.4) node [anchor=north west][inner sep=0.75pt]    {$\overline{R_{2}}( x,y)$};
\draw (445,31.4) node [anchor=north west][inner sep=0.75pt]    {$x$};
\draw (547,28.4) node [anchor=north west][inner sep=0.75pt]    {$y$};
\draw (444,152.4) node [anchor=north west][inner sep=0.75pt]    {$y$};
\draw (546,153.4) node [anchor=north west][inner sep=0.75pt]    {$x$};

\end{tikzpicture}

\caption{Labeling rules for biquandle.}
\label{fig:biquandle_crossing_rules}
\end{figure}

In a similar way, one can associate a {\it fundamental virtual biquandle} $(VBQ(L), f , R)$ to $L$. This virtual biquandle is defined as the quotient of the free virtual biquandle generated by the labels on the set of semiarcs in the virtual link diagram $D$. This  is done by applying the equivalence relation induced by the relations at the crossings in $D$, as shown in Figure \ref{fig:old_virtual_crossing_rules}. Importantly, the virtual biquandle $(VBQ(L), f, R)$ is independent of the diagram $D$. See \cite[Section 2.6]{MR2191942} for more details.

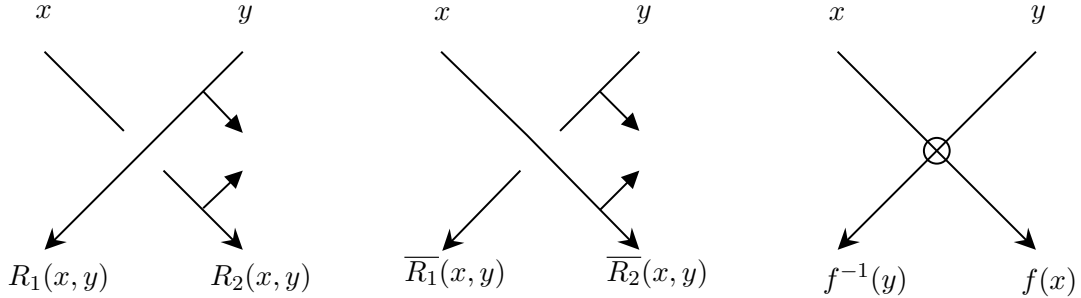
\begin{figure}[H]

\tikzset{every picture/.style={line width=0.75pt}} 

\tikzset{every picture/.style={line width=0.75pt}} 

\begin{tikzpicture}[x=0.75pt,y=0.75pt,yscale=-1,xscale=1]

\draw    (50,50) -- (90,90) ;
\draw    (110,110) -- (147.88,147.88) ;
\draw [shift={(150,150)}, rotate = 225] [fill={rgb, 255:red, 0; green, 0; blue, 0 }  ][line width=0.08]  [draw opacity=0] (10.72,-5.15) -- (0,0) -- (10.72,5.15) -- (7.12,0) -- cycle    ;
\draw    (150,50) -- (52.12,147.88) ;
\draw [shift={(50,150)}, rotate = 315] [fill={rgb, 255:red, 0; green, 0; blue, 0 }  ][line width=0.08]  [draw opacity=0] (10.72,-5.15) -- (0,0) -- (10.72,5.15) -- (7.12,0) -- cycle    ;
\draw    (252.65,148.4) -- (290,110) ;
\draw [shift={(250.56,150.55)}, rotate = 314.21] [fill={rgb, 255:red, 0; green, 0; blue, 0 }  ][line width=0.08]  [draw opacity=0] (10.72,-5.15) -- (0,0) -- (10.72,5.15) -- (7.12,0) -- cycle    ;
\draw    (310,90) -- (350,50) ;
\draw    (250,50) -- (293.37,92.19) -- (347.9,147.86) ;
\draw [shift={(350,150)}, rotate = 225.59] [fill={rgb, 255:red, 0; green, 0; blue, 0 }  ][line width=0.08]  [draw opacity=0] (10.72,-5.15) -- (0,0) -- (10.72,5.15) -- (7.12,0) -- cycle    ;
\draw    (450,50) -- (547.88,147.88) ;
\draw [shift={(550,150)}, rotate = 225] [fill={rgb, 255:red, 0; green, 0; blue, 0 }  ][line width=0.08]  [draw opacity=0] (10.72,-5.15) -- (0,0) -- (10.72,5.15) -- (7.12,0) -- cycle    ;
\draw    (550,50) -- (452.12,147.88) ;
\draw [shift={(450,150)}, rotate = 315] [fill={rgb, 255:red, 0; green, 0; blue, 0 }  ][line width=0.08]  [draw opacity=0] (10.72,-5.15) -- (0,0) -- (10.72,5.15) -- (7.12,0) -- cycle    ;
\draw   (493.5,100) .. controls (493.5,96.41) and (496.41,93.5) .. (500,93.5) .. controls (503.59,93.5) and (506.5,96.41) .. (506.5,100) .. controls (506.5,103.59) and (503.59,106.5) .. (500,106.5) .. controls (496.41,106.5) and (493.5,103.59) .. (493.5,100) -- cycle ;
\draw    (130,70) -- (147.93,88.83) ;
\draw [shift={(150,91)}, rotate = 226.4] [fill={rgb, 255:red, 0; green, 0; blue, 0 }  ][line width=0.08]  [draw opacity=0] (8.93,-4.29) -- (0,0) -- (8.93,4.29) -- cycle    ;
\draw    (330,130) -- (347.88,112.12) ;
\draw [shift={(350,110)}, rotate = 135] [fill={rgb, 255:red, 0; green, 0; blue, 0 }  ][line width=0.08]  [draw opacity=0] (8.93,-4.29) -- (0,0) -- (8.93,4.29) -- cycle    ;
\draw    (130,129) -- (147.83,112.07) ;
\draw [shift={(150,110)}, rotate = 136.47] [fill={rgb, 255:red, 0; green, 0; blue, 0 }  ][line width=0.08]  [draw opacity=0] (8.93,-4.29) -- (0,0) -- (8.93,4.29) -- cycle    ;
\draw    (330,70) -- (347.88,87.88) ;
\draw [shift={(350,90)}, rotate = 225] [fill={rgb, 255:red, 0; green, 0; blue, 0 }  ][line width=0.08]  [draw opacity=0] (8.93,-4.29) -- (0,0) -- (8.93,4.29) -- cycle    ;

\draw (44,25.4) node [anchor=north west][inner sep=0.75pt]    {$x$};
\draw (146,25.4) node [anchor=north west][inner sep=0.75pt]    {$y$};
\draw (30,155.4) node [anchor=north west][inner sep=0.75pt]    {$R_{1}( x,y)$};
\draw (132,155.4) node [anchor=north west][inner sep=0.75pt]    {$R_{2}( x,y)$};
\draw (245,26.4) node [anchor=north west][inner sep=0.75pt]    {$x$};
\draw (347,25.4) node [anchor=north west][inner sep=0.75pt]    {$y$};
\draw (230,151.4) node [anchor=north west][inner sep=0.75pt]    {$\overline{R_{1}}( x,y)$};
\draw (332,151.4) node [anchor=north west][inner sep=0.75pt]    {$\overline{R_{2}}( x,y)$};
\draw (444,26.4) node [anchor=north west][inner sep=0.75pt]    {$x$};
\draw (546,25.4) node [anchor=north west][inner sep=0.75pt]    {$y$};
\draw (441,155.4) node [anchor=north west][inner sep=0.75pt]    {$f^{-1}( y)$};
\draw (541,157.4) node [anchor=north west][inner sep=0.75pt]    {$f( x)$};

\end{tikzpicture}

\caption{Labeling rules for virtual biquandle.} \label{fig:old_virtual_crossing_rules}
\end{figure}

A {\it coloring} of a virtual link $L$ by a virtual biquandle $(X,R',f')$ is a virtual biquandle homomorphism $\phi:(VBQ(L), f, R) \to (X,R',f')$. Note that labeling the semiarcs of the diagram $D$ with elements from $(X,R',f')$, following the rules depicted in Figure \ref{fig:old_virtual_crossing_rules}, results in a coloring of $L$.



\section{Set theoretic bijection between the set of colorings by biquandles and virtual biquandles}\label{Col}
In this section, we prove that if $L$ is a virtual link, then the set of colorings of $L$ by a virtual biquandle $(X,f,R)$ under the rules illustrated in Figure \ref{fig:old_virtual_crossing_rules} is in bijection with the set of colorings of $L$ by $(X,VR)$ under the rules illustrated in Figure \ref{fig:biquandle_crossing_rules}.

First recall the definition of the virtual braid group $VB_n$ on $n$-strands, which has the following presentation (see \cite[Theorem 4]{MR2493369}):

\begin{itemize}
\item Generators: $\sigma_1, \sigma_2, \ldots, \sigma_{n-1},\rho_1, \rho_2, \ldots, \rho_{n-1}.$
\item Relations: 
\begin{align*}
&\sigma_i \sigma_{i+1} \sigma_i = \sigma_{i+1} \sigma_i \sigma_{i+1}, ~i=1,2,\ldots, n-2;\\
&\sigma_i \sigma_j = \sigma_j \sigma_i, ~|i-j| \geq 2;\\
&\rho_i\rho_{i+1} \rho_i = \rho_{i+1}\rho_i \rho_{i+1}, ~ i=1,2,\ldots,n-2;\\
&\rho_i \rho_j =\rho_j \rho_i,~ | i-j| \geq 2;\\
&\rho_i^2 =1,~ i=1,2,\ldots,n-1;\\
&\sigma_i \rho_j = \rho_j \sigma_i,~ |i-j| \geq 2;\\
&\rho_i \rho_{i+1} \sigma_i = \sigma_{i+1} \rho_i \rho_{i+1},~ i=1,2,\ldots, n-2.
\end{align*}
\end{itemize}
Geometrically, $\sigma_i$, $\sigma_i^{-1}$ and $\rho_i$ are shown in Figure \ref{fig:generators_of_VB_n}.

\begin{figure}[H]

\tikzset{every picture/.style={line width=0.75pt}} 

\begin{tikzpicture}[x=0.75pt,y=0.75pt,yscale=-1,xscale=1]

\draw    (50,51) -- (70,71) ;
\draw    (330,70) -- (350,50) ;
\draw    (100,51) -- (52.12,98.88) ;
\draw [shift={(50,101)}, rotate = 315] [fill={rgb, 255:red, 0; green, 0; blue, 0 }  ][line width=0.08]  [draw opacity=0] (10.72,-5.15) -- (0,0) -- (10.72,5.15) -- (7.12,0) -- cycle    ;
\draw    (300,50) -- (347.88,97.88) ;
\draw [shift={(350,100)}, rotate = 225] [fill={rgb, 255:red, 0; green, 0; blue, 0 }  ][line width=0.08]  [draw opacity=0] (10.72,-5.15) -- (0,0) -- (10.72,5.15) -- (7.12,0) -- cycle    ;
\draw    (320,80) -- (302.12,97.88) ;
\draw [shift={(300,100)}, rotate = 315] [fill={rgb, 255:red, 0; green, 0; blue, 0 }  ][line width=0.08]  [draw opacity=0] (10.72,-5.15) -- (0,0) -- (10.72,5.15) -- (7.12,0) -- cycle    ;
\draw    (80,81) -- (97.88,98.88) ;
\draw [shift={(100,101)}, rotate = 225] [fill={rgb, 255:red, 0; green, 0; blue, 0 }  ][line width=0.08]  [draw opacity=0] (10.72,-5.15) -- (0,0) -- (10.72,5.15) -- (7.12,0) -- cycle    ;
\draw    (550,50) -- (597.88,97.88) ;
\draw [shift={(600,100)}, rotate = 225] [fill={rgb, 255:red, 0; green, 0; blue, 0 }  ][line width=0.08]  [draw opacity=0] (10.72,-5.15) -- (0,0) -- (10.72,5.15) -- (7.12,0) -- cycle    ;
\draw    (600,50) -- (552.12,97.88) ;
\draw [shift={(550,100)}, rotate = 315] [fill={rgb, 255:red, 0; green, 0; blue, 0 }  ][line width=0.08]  [draw opacity=0] (10.72,-5.15) -- (0,0) -- (10.72,5.15) -- (7.12,0) -- cycle    ;
\draw    (500,50) -- (500,97) ;
\draw [shift={(500,100)}, rotate = 270] [fill={rgb, 255:red, 0; green, 0; blue, 0 }  ][line width=0.08]  [draw opacity=0] (10.72,-5.15) -- (0,0) -- (10.72,5.15) -- (7.12,0) -- cycle    ;
\draw    (650,50) -- (650,97) ;
\draw [shift={(650,100)}, rotate = 270] [fill={rgb, 255:red, 0; green, 0; blue, 0 }  ][line width=0.08]  [draw opacity=0] (10.72,-5.15) -- (0,0) -- (10.72,5.15) -- (7.12,0) -- cycle    ;
\draw    (250,50) -- (250,97) ;
\draw [shift={(250,100)}, rotate = 270] [fill={rgb, 255:red, 0; green, 0; blue, 0 }  ][line width=0.08]  [draw opacity=0] (10.72,-5.15) -- (0,0) -- (10.72,5.15) -- (7.12,0) -- cycle    ;
\draw    (400,50) -- (400,97) ;
\draw [shift={(400,100)}, rotate = 270] [fill={rgb, 255:red, 0; green, 0; blue, 0 }  ][line width=0.08]  [draw opacity=0] (10.72,-5.15) -- (0,0) -- (10.72,5.15) -- (7.12,0) -- cycle    ;
\draw    (150,50) -- (150,97) ;
\draw [shift={(150,100)}, rotate = 270] [fill={rgb, 255:red, 0; green, 0; blue, 0 }  ][line width=0.08]  [draw opacity=0] (10.72,-5.15) -- (0,0) -- (10.72,5.15) -- (7.12,0) -- cycle    ;
\draw    (0,50) -- (0,97) ;
\draw [shift={(0,100)}, rotate = 270] [fill={rgb, 255:red, 0; green, 0; blue, 0 }  ][line width=0.08]  [draw opacity=0] (10.72,-5.15) -- (0,0) -- (10.72,5.15) -- (7.12,0) -- cycle    ;
\draw   (570.36,75) .. controls (570.36,72.44) and (572.44,70.36) .. (575,70.36) .. controls (577.56,70.36) and (579.64,72.44) .. (579.64,75) .. controls (579.64,77.56) and (577.56,79.64) .. (575,79.64) .. controls (572.44,79.64) and (570.36,77.56) .. (570.36,75) -- cycle ;

\draw (497,32.4) node [anchor=north west][inner sep=0.75pt]  [font=\scriptsize]  {$1$};
\draw (548,32.4) node [anchor=north west][inner sep=0.75pt]  [font=\scriptsize]  {$i$};
\draw (588,33.4) node [anchor=north west][inner sep=0.75pt]  [font=\scriptsize]  {$i+1$};
\draw (513,62.4) node [anchor=north west][inner sep=0.75pt]  [font=\normalsize]  {$\dotsc $};
\draw (614,62.4) node [anchor=north west][inner sep=0.75pt]  [font=\normalsize]  {$\dotsc $};
\draw (647,32.4) node [anchor=north west][inner sep=0.75pt]  [font=\scriptsize]  {$n$};
\draw (247,32.4) node [anchor=north west][inner sep=0.75pt]  [font=\scriptsize]  {$1$};
\draw (263,62.4) node [anchor=north west][inner sep=0.75pt]  [font=\normalsize]  {$\dotsc $};
\draw (364,62.4) node [anchor=north west][inner sep=0.75pt]  [font=\normalsize]  {$\dotsc $};
\draw (397,32.4) node [anchor=north west][inner sep=0.75pt]  [font=\scriptsize]  {$n$};
\draw (299,32.4) node [anchor=north west][inner sep=0.75pt]  [font=\scriptsize]  {$i$};
\draw (339,33.4) node [anchor=north west][inner sep=0.75pt]  [font=\scriptsize]  {$i+1$};
\draw (114,62.4) node [anchor=north west][inner sep=0.75pt]  [font=\normalsize]  {$\dotsc $};
\draw (147,32.4) node [anchor=north west][inner sep=0.75pt]  [font=\scriptsize]  {$n$};
\draw (-3,32.4) node [anchor=north west][inner sep=0.75pt]  [font=\scriptsize]  {$1$};
\draw (13,62.4) node [anchor=north west][inner sep=0.75pt]  [font=\normalsize]  {$\dotsc $};
\draw (49,33.4) node [anchor=north west][inner sep=0.75pt]  [font=\scriptsize]  {$i$};
\draw (89,34.4) node [anchor=north west][inner sep=0.75pt]  [font=\scriptsize]  {$i+1$};
\draw (68,103.4) node [anchor=north west][inner sep=0.75pt]  [font=\normalsize]  {$\sigma _{i}$};
\draw (318,101.4) node [anchor=north west][inner sep=0.75pt]    {$\sigma _{i}^{-1}$};
\draw (571,102.4) node [anchor=north west][inner sep=0.75pt]    {$\rho _{i}$};

\end{tikzpicture}
\caption{Generators of virtual braid group $VB_n$.} \label{fig:generators_of_VB_n}
\end{figure}
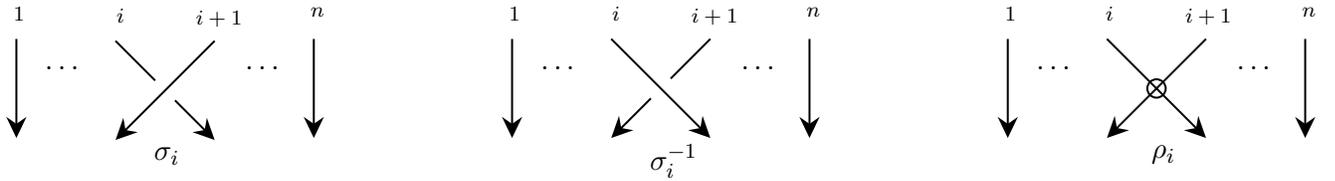

For $\beta_1, \beta_2 \in VB_n$, the composition $\beta_2 \beta_1$ in $VB_n$ represents the braid shown in Figure \ref{fig:composition_of_braids}.

\begin{figure}[H]
\begin{center}
\includegraphics[height=1in,width=2in,angle=00]{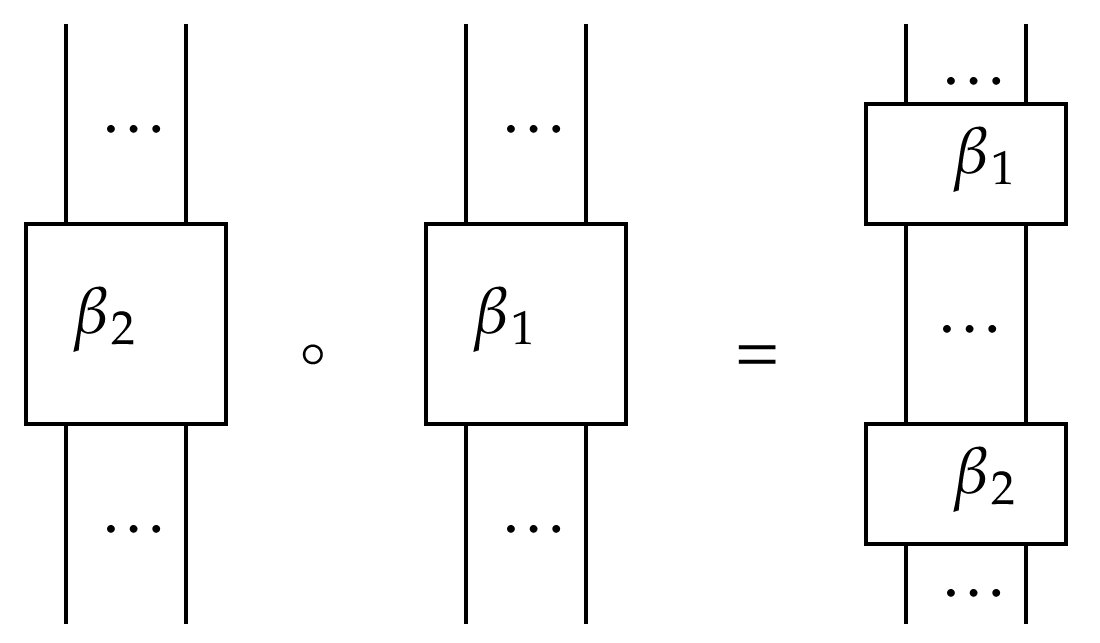}

\end{center}
\caption{Composition operation on $VB_n$.}
\label{fig:composition_of_braids}
\end{figure}

Let $VB_n$ be the virtual braid group on $n$-strands and $(X, f, R)$ be a virtual biquandle. Then $VB_n$ acts on the product $X^n$ of $n$ copies of $X$ by the representation
\[
\phi: VB_n \to \Aut(X^n)
\]
defined as follows:
\begin{align*}
	&\phi(\sigma_i) :
	\left\{
	\begin{array}{l}
		x_i \mapsto  R_1(x_i,x_{i+1}), \\
		x_{i+1} \mapsto R_2(x_i,x_{i+1}),  \\
		x_j \mapsto x_j, \textrm{ for } j \neq i, i+1, \\
	\end{array}
	\right.~~~
	&\phi(\rho_i) :
	\left\{
	\begin{array}{l}
		x_i \mapsto  f^{-1}(x_{i+1}), \\
		x_{i+1} \mapsto ~~f(x_i), \\
	\end{array}
	\right.
\end{align*}
where, $1 \leq i \leq n-1$. It is easy to check that
\begin{align*}
&\phi(\sigma_i^{-1}) :
	\left\{	\begin{array}{l}
		x_i \mapsto  \overbar{R_1}(x_i,x_{i+1}), \\
		x_{i+1} \mapsto ~~\overbar{R_2}(x_i,x_{i+1}). \\
	\end{array}
\right.
\end{align*}
Now, consider an automorphism \[\theta: X^n \to X^n\] defined as 
\[\theta = f^{n-1} \times f^{n-2} \times \cdots \times f^{n-i} \times \cdots \times f \times \id.\] Note that 
\[\theta^{-1} = f^{-n+1} \times f^{-n+2} \times \cdots \times f^{-n+i} \times \cdots \times f^{-1} \times \id.\]
Now define a new action $$\psi: VB_n \to \Aut(X^n)$$ of $VB_n$ on $X^n$ given by $$\psi(\beta)= \theta \circ \phi (\beta) \circ \theta^{-1}.$$ We give the following explicit images of the generators of $VB_n$ under the representation $\psi$:

\begin{align*}
	&\psi(\sigma_i) :
	\left\{
	\begin{array}{l}
		x_i \mapsto  R_1\big(x_i,f(x_{i+1})\big), \\
		x_{i+1} \mapsto R_2\big(f^{-1}(x_i),x_{i+1}\big),  \\
		x_j \mapsto x_j, \textrm{ for } j \neq i, i+1, \\
	\end{array}
	\right.~~~
	&\psi(\rho_i) :
	\left\{
	\begin{array}{l}
		x_i \mapsto  x_{i+1}, \\
		x_{i+1} \mapsto ~~x_i \\
	\end{array}
	\right.
\end{align*}
and

\begin{align*}
&\psi(\sigma_i^{-1}) :
	\left\{	\begin{array}{l}
		x_i \mapsto  \overbar{R_1}\big(x_i,f(x_{i+1})\big), \\
		x_{i+1} \mapsto ~~\overbar{R_2}\big(f^{-1}(x_i),x_{i+1}\big). \\
	\end{array}
\right.
\end{align*}

\begin{lemma}\label{lem:number_of_colorings_are_same}
Let $\beta \in VB_n$ and $(X,f,R)$ be a virtual biquandle. Then there is a bijection between the fixed point set of $\phi(\beta)$ and $\psi(\beta)$.
\end{lemma}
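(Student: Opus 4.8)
The plan is to exploit the fact that, by construction, $\psi(\beta)$ is nothing but the conjugate of $\phi(\beta)$ by the automorphism $\theta$ of $X^n$, namely $\psi(\beta) = \theta \circ \phi(\beta) \circ \theta^{-1}$. Since conjugation by a bijection always carries fixed points to fixed points, the map $\theta$ itself will serve as the required bijection between $\Fix(\phi(\beta))$ and $\Fix(\psi(\beta))$; there is no need to treat the generators or the group structure separately, as the conjugacy holds for every $\beta \in VB_n$ simultaneously.

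Concretely, I would first note that $\theta = f^{n-1} \times f^{n-2} \times \cdots \times f \times \id$ is a bijection of $X^n$, with two-sided inverse $\theta^{-1} = f^{-n+1} \times f^{-n+2} \times \cdots \times f^{-1} \times \id$, since each factor $f$ is an automorphism of $(X,R)$. Then I would verify the forward direction: if $v \in X^n$ satisfies $\phi(\beta)(v) = v$, then applying the conjugacy relation gives
\[
\psi(\beta)\big(\theta(v)\big) = \theta \circ \phi(\beta) \circ \theta^{-1}\big(\theta(v)\big) = \theta\big(\phi(\beta)(v)\big) = \theta(v),
\]
so $\theta(v) \in \Fix(\psi(\beta))$. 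The reverse direction is symmetric: if $w \in \Fix(\psi(\beta))$, the same computation with $\theta^{-1}$ shows $\theta^{-1}(w) \in \Fix(\phi(\beta))$.

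Hence $\theta$ restricts to a well-defined map $\Fix(\phi(\beta)) \to \Fix(\psi(\beta))$ whose inverse is the restriction of $\theta^{-1}$, establishing the desired bijection. I do not anticipate any genuine obstacle here: the entire content is the observation that the two representations differ by conjugation through $\theta$, which was already the defining feature of $\psi$. The only point worth stating carefully is that $\theta$ is a bijection, which is immediate from $f$ being an automorphism.
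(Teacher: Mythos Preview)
Your proposal is correct and matches the paper's own proof essentially verbatim: both arguments simply observe that $\psi(\beta) = \theta \circ \phi(\beta) \circ \theta^{-1}$ with $\theta$ a bijection, so $\theta$ carries $\Fix(\phi(\beta))$ bijectively onto $\Fix(\psi(\beta))$. The paper states this in one sentence; your version just spells out both directions explicitly.
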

\begin{proof}
It is sufficient to observe that if $\phi(\beta)\big( (x_1, \ldots, x_n) \big) =(x_1, \ldots, x_n)$, then $\psi(\beta) \big(\theta( (x_1, \ldots, x_n) ) \big) = \theta( (x_1, \ldots, x_n) )$, and $\theta$ is a bijection.
\end{proof}

By \cite{MR2351010}, for every virtual link $L$ there exists a virtual braid $\beta$ whose closure is $L$.

\begin{theorem}\label{thm:number_of_colorings_are_same}
Suppose $(X,f,R)$ is a virtual biquandle and $L$ a virtual link diagram. Then the set of colorings of $L$ by $(X,f,R)$, under the virtual biquandle coloring rule $($see Figure \ref{fig:old_virtual_crossing_rules}$)$, is in bijection with the set of colorings of $L$ by $(X,VR)$, under the biquandle coloring rule $($see Figure \ref{fig:biquandle_crossing_rules}$)$.
\end{theorem}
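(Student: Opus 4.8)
The plan is to reduce the statement to a comparison of the fixed-point sets of the two representations $\phi$ and $\psi$, and then to invoke Lemma \ref{lem:number_of_colorings_are_same}. By Kamada's Alexander theorem \cite{MR2351010} (quoted just above the statement), I may write $L = \widehat{\beta}$ as the closure of some virtual braid $\beta \in VB_n$. The whole argument then rests on the standard dictionary between colorings of a braid closure and fixed points of the associated braid action on $X^n$: I would label the $n$ bottom endpoints of $\beta$ by $x_1, \dots, x_n$, propagate these labels upward through $\beta$ according to the relevant crossing rule, and observe that the labels emerging at the top are exactly the image of $(x_1,\dots,x_n)$ under the representation. Since the closure identifies the $j$-th top endpoint with the $j$-th bottom endpoint, a labeling descends to a consistent coloring of $\widehat{\beta}$ if and only if the top and bottom tuples agree, that is, if and only if $(x_1,\dots,x_n)$ is a fixed point of the action.

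The key point is that each representation encodes precisely one of the two coloring conventions. Indeed, $\phi(\sigma_i)$ and $\phi(\rho_i)$ were defined to reproduce the virtual-biquandle rules of Figure \ref{fig:old_virtual_crossing_rules} (a classical crossing acts by $R$ and a virtual crossing sends $(x,y)$ to $(f^{-1}(y), f(x))$), so colorings of $L$ by $(X,f,R)$ under those rules are in bijection with $\Fix(\phi(\beta))$. On the other hand, from the explicit formulas computed above, $\psi(\sigma_i)$ acts by $\big(x_i, x_{i+1}\big) \mapsto \big(R_1(x_i,f(x_{i+1})), R_2(f^{-1}(x_i), x_{i+1})\big) = VR(x_i, x_{i+1})$, while $\psi(\rho_i)$ is simply the transposition $x_i \leftrightarrow x_{i+1}$. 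These are exactly the biquandle rules of Figure \ref{fig:biquandle_crossing_rules} applied to the biquandle $(X, VR)$, where a classical crossing acts by $VR$ and a virtual crossing swaps the two labels. Hence colorings of $L$ by $(X, VR)$ under the biquandle rules are in bijection with $\Fix(\psi(\beta))$.

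With both coloring sets identified as the fixed-point sets of $\phi(\beta)$ and $\psi(\beta)$ respectively, Lemma \ref{lem:number_of_colorings_are_same} supplies the bijection $\Fix(\phi(\beta)) \to \Fix(\psi(\beta))$ induced by the automorphism $\theta$, which completes the proof. I expect the main obstacle to be the careful justification of the coloring/fixed-point dictionary rather than any computation: one must check that propagating labels through $\beta$ using the crossing rule of a given figure genuinely realizes the generator-by-generator action of the corresponding representation (including the orientation-dependent and inverse cases such as $\sigma_i^{-1}$), and that the resulting correspondence is independent of the chosen braid representative $\beta$. The latter follows from the fact that colorings are homomorphisms out of the (diagram-independent) fundamental virtual biquandle, together with the observation that Lemma \ref{lem:number_of_colorings_are_same} produces its bijection through the single fixed map $\theta$ rather than anything depending on $\beta$.
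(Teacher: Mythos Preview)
Your proposal is correct and follows essentially the same route as the paper: choose a braid $\beta$ with closure $L$, identify the two coloring sets with $\Fix(\phi(\beta))$ and $\Fix(\psi(\beta))$ respectively, and then apply Lemma~\ref{lem:number_of_colorings_are_same}. Your write-up is in fact more explicit than the paper's, which simply asserts the coloring/fixed-point dictionary and then handles well-definedness by remarking that coloring sets of diagrams related by generalized Reidemeister moves are always in bijection (rather than by invoking the fundamental virtual biquandle as you do).
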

\begin{proof}
Let $\beta \in VB_n$ such that its closure is $L$. Then the set of colorings of $L$ by $(X,f,R)$ under the virtual biquandle rule $($see Figure \ref{fig:old_virtual_crossing_rules}$)$ and by $(X, VR)$ under the biquandle coloring rule $($see Figure \ref{fig:biquandle_crossing_rules}$)$ are the fixed points of $\phi(\beta)$ and $\psi(\beta)$, respectively. By Lemma \ref{lem:number_of_colorings_are_same}, and noting that the set of colorings of virtual link diagrams related by the generalized Reidemeister moves (Figure \ref{fig:g_reidemeister_moves}) are always in bijection, the result follows.
\end{proof}

As a result of Theorem \ref{thm:number_of_colorings_are_same} we have the following corollary.

\begin{corollary}\label{cor:cor_number_of_colorings_are_same}
Let $L$ be a virtual knot and $(X,f,R)$ a finite virtual biquandle. Then the number of colorings of $L$ by $(X,f,R)$ under the rules shown in Figure \ref{fig:old_virtual_crossing_rules} is equal to the number of colorings of $L$ by $(X, VR)$ under the rules shown in Figure \ref{fig:biquandle_crossing_rules}.
\end{corollary}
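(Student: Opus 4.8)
The plan is to deduce this corollary directly from Theorem \ref{thm:number_of_colorings_are_same}, since the theorem already contains all the essential content; the corollary merely converts a set-theoretic bijection into an equality of counts. First I would observe that a virtual knot is in particular a virtual link, so the theorem applies verbatim: it produces an explicit bijection between the set of colorings of $L$ by the virtual biquandle $(X,f,R)$ (under the rules of Figure \ref{fig:old_virtual_crossing_rules}) and the set of colorings of $L$ by the biquandle $(X,VR)$ (under the rules of Figure \ref{fig:biquandle_crossing_rules}).

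The one additional ingredient is finiteness. Because $X$ is assumed finite, both coloring sets are finite. Indeed, fixing a virtual braid $\beta \in VB_n$ whose closure is $L$ (which exists by \cite{MR2351010}), each coloring is identified with a fixed point of $\phi(\beta)$, respectively $\psi(\beta)$, inside $X^n$, so each coloring set is a subset of the finite set $X^n$. I would then invoke the elementary fact that a bijection between two finite sets forces them to have the same cardinality, which yields the asserted equality of the two coloring numbers.

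There is essentially no obstacle here beyond the bookkeeping just described: all of the genuine work — constructing the two representations $\phi$ and $\psi$, relating them through the conjugating automorphism $\theta$ in Lemma \ref{lem:number_of_colorings_are_same}, and checking invariance under the generalized Reidemeister moves — was carried out in the proof of Theorem \ref{thm:number_of_colorings_are_same}. The only point meriting an explicit remark is that finiteness of $X$ transfers to the coloring sets, which is immediate from their realization as subsets of $X^n$, so the corollary follows in a single line from the theorem.
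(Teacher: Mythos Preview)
Your proposal is correct and matches the paper's approach exactly: the paper simply states that the corollary follows as a result of Theorem~\ref{thm:number_of_colorings_are_same}, and you have spelled out the (trivial) step of passing from a bijection to an equality of cardinalities using finiteness of $X$. There is nothing to add.
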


Thus, the number of colorings of a virtual link $L$ by a virtual biquandle can be retrieved from  colorings of $L$ by a suitable biquandle.
\section{Deriving Fundamental Virtual Biquandle from Classical Crossings}\label{Relation}

Let $(FVBQ_n,f,R)$ denotes the free virtual biquandle on the set $\{x_1, x_2, \ldots, x_n\}$. The virtual braid group $VB_n$ on $n$-strands acts on $FVBQ_n$ via the representation $$\Phi: VB_n \to \Aut\big((FVBQ_n, f, R)\big)$$ defined as follows:

\begin{align*}
	&\Phi(\sigma_i) :
	\left\{
	\begin{array}{l}
		x_i \mapsto  R_1(x_i,x_{i+1}), \\
		x_{i+1} \mapsto R_2(x_i,x_{i+1}),  \\
		x_j \mapsto x_j, \textrm{ for } j \neq i, i+1, \\
	\end{array}
	\right.~~~
	&\Phi(\rho_i) :
	\left\{
	\begin{array}{l}
		x_i \mapsto  f^{-1}(x_{i+1}), \\
		x_{i+1} \mapsto ~~f(x_i). \\
	\end{array}
	\right.
\end{align*}
Note that  for $1 \leq i \leq n-1$,
\begin{align*}
\Phi(\sigma_{i}) \circ f &= f \circ \Phi(\sigma_{i})\\
\Phi(\rho_{i}) \circ f&=f \circ \Phi(\rho_{i}).\\
\end{align*}
One can check that
\begin{align*}
&\Phi(\sigma_i^{-1}) :
	\left\{	\begin{array}{l}
		x_i \mapsto  \overbar{R_1}(x_i,x_{i+1}), \\
		x_{i+1} \mapsto ~~\overbar{R_2}(x_i,x_{i+1}). \\
	\end{array}
\right.
\end{align*}
Now, consider an automorphism $$\Theta: (FVBQ_n, f, R) \to (FVBQ_n, f, R)$$ defined as $$\Theta(x_i)=f^{n-i}(x_i).$$
Now define a new action of $VB_n$ on $(FVBQ_n, f,R)$ $$\Psi: VB_n \to \Aut((FVBQ_n, f,R))$$ given by $$\Psi(\beta)= \Theta \circ \Phi (\beta) \circ \Theta^{-1}.$$ We write down the images of the generators of $VB_n$ under the map $\Psi$:

\begin{align*}
	&\Psi(\sigma_i) :
	\left\{
	\begin{array}{l}
		x_i \mapsto  R_1\big(x_i,f(x_{i+1})\big), \\
		x_{i+1} \mapsto R_2\big(f^{-1}(x_i),x_{i+1}\big),  \\
		x_j \mapsto x_j, \textrm{ for } j \neq i, i+1, \\
	\end{array}
	\right.~~~
	&\Psi(\rho_i) :
	\left\{
	\begin{array}{l}
		x_i \mapsto  x_{i+1}, \\
		x_{i+1} \mapsto ~~x_i \\
	\end{array}
	\right.
\end{align*}
and

\begin{align*}
&\Psi(\sigma_i^{-1}) :
	\left\{	\begin{array}{l}
		x_i \mapsto  \overbar{R_1}\big(x_i,f(x_{i+1})\big), \\
		x_{i+1} \mapsto ~~\overbar{R_2}\big(f^{-1}(x_i),x_{i+1}\big). \\
	\end{array}
\right.
\end{align*}
For a given $\beta \in VB_n$, associate a virtual biquandle $VBQ(\Phi,\beta) = FVBQ_n/\mathcal{R}$, where $\mathcal{R}$ is the equivalence relation generated by the set $\{ \Phi(\beta)(x_i)=x_i~|~ 1 \leq i \leq n\}$. We write $VBQ(\Phi,\beta)$ in a presentation form as follows:

$$
\langle x_1, \ldots, x_n, f, R~|~\Phi(\beta)(x_i)=x_i\rangle.
$$

Let $\beta \in VB_n$, and $\cl(\beta)$ denotes the closure of $\beta$.

\begin{theorem}
Let $\beta,\beta' \in VB_n$ such that $\cl(\beta)$ and $\cl(\beta')$ represents the same virtual link. Then $VBQ(\Phi,\beta)\cong VBQ(\Phi,\beta')$.
\end{theorem}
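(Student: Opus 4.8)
The plan is to identify $VBQ(\Phi,\beta)$ with the fundamental virtual biquandle $(VBQ(\cl(\beta)),f,R)$ of its closure, and then to invoke the diagram-independence of the fundamental virtual biquandle recalled in Section~\ref{pre}. Granting this identification, the conclusion is immediate: since $\cl(\beta)$ and $\cl(\beta')$ represent the same virtual link $L$, both are diagrams of $L$, so $VBQ(\Phi,\beta)\cong (VBQ(L),f,R)\cong VBQ(\Phi,\beta')$ as virtual biquandles. Thus the whole task reduces to establishing the identification $VBQ(\Phi,\beta)\cong (VBQ(\cl(\beta)),f,R)$.

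To establish it, I would read the closed braid diagram $\cl(\beta)$ from top to bottom. Label the $n$ strands at the top by $x_1,\dots,x_n$ and assign to every semiarc the label forced by the rules of Figure~\ref{fig:old_virtual_crossing_rules} at each classical and virtual crossing. By the very definition of $\Phi$, propagating these labels through $\beta$ produces $\Phi(\beta)(x_1),\dots,\Phi(\beta)(x_n)$ as the labels of the $n$ strands at the bottom. Closing the braid identifies the $i$-th bottom strand with the $i$-th top strand, which imposes precisely the relations $\Phi(\beta)(x_i)=x_i$ for $1\le i\le n$. Now the fundamental virtual biquandle of $\cl(\beta)$ is presented by \emph{all} semiarc labels subject to one relation per crossing; the core of the argument is a sequence of Tietze transformations that eliminate every intermediate semiarc generator by expressing it, via the crossing relations, in terms of $x_1,\dots,x_n$. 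After this elimination only the closure relations $\Phi(\beta)(x_i)=x_i$ survive, giving exactly the presentation $\langle x_1,\dots,x_n,f,R \mid \Phi(\beta)(x_i)=x_i\rangle = VBQ(\Phi,\beta)$. Because $\Phi(\sigma_i)$ and $\Phi(\rho_i)$ commute with $f$ (as noted just after the definition of $\Phi$), this identification respects the unary operator and is therefore an isomorphism of virtual biquandles.

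I expect the Tietze-elimination step to be the main obstacle, since one must verify that the intermediate generators can be removed consistently — in particular that the labels introduced by virtual crossings (which involve $f$ and $f^{-1}$) are correctly absorbed into $\Phi(\beta)$, and that no relation is lost in passing from the full crossing-by-crossing presentation to the reduced one on the top strands. An alternative and more self-contained route bypasses the fundamental virtual biquandle and instead invokes the virtual Markov theorem \cite{MR2351010, MR2371718}: it then suffices to prove that $VBQ(\Phi,\cdot)$ is invariant under conjugation and under the stabilization moves. Conjugation invariance is clean, since for $\gamma\in VB_n$ the automorphism $\Phi(\gamma)$ of $FVBQ_n$ carries the congruence cutting out $VBQ(\Phi,\beta)$ onto the one cutting out $VBQ(\Phi,\gamma\beta\gamma^{-1})$ — because the smallest congruence making an endomorphism act as the identity is carried to the analogous congruence for its $\Phi(\gamma)$-conjugate — and $\Phi(\gamma)$ commutes with $f$. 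The stabilization moves again reduce to eliminating the new strand $x_{n+1}$ by means of the relation coming from $\sigma_n^{\pm1}$ or $\rho_n$, which is the same Tietze-style computation as above. Either way, the essential difficulty is the generator elimination, and the rest is bookkeeping.
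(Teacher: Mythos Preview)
Your proposal is correct and follows essentially the same approach as the paper: the paper's proof is a one-line appeal to the identification of $VBQ(\Phi,\beta)$ with the fundamental virtual biquandle of $\cl(\beta)$ (citing \cite[Section 2.6]{MR2191942} and \cite[Section 5]{MR2384830}), followed by diagram-independence. You supply more detail than the paper on the Tietze-elimination step that justifies this identification, and your alternative Markov-theorem route is a genuine bonus not pursued in the paper, but the core strategy is the same.
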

\begin{proof}
The proof follows from the fact that $VBQ(\Phi, \beta)$ and $VBQ(\Phi, \beta')$ are the fundamental virtual biquandles associated to $\cl(\beta)$ and $\cl(\beta')$ (see \cite[Section 2.6]{MR2191942} and \cite[Section 5]{MR2384830}).
\end{proof}
Thus, the associated virtual biquandle $VBQ(\Phi,\beta)$ is an invariant of virtual links.

\begin{theorem}\label{thm:both_virtual_biquandles_are_same}
Let $\beta \in VB_n$. Then the virtual biquandle $VBQ(\Psi,\beta)$ is isomorphic to the virtual biquandle $VBQ(\Phi,\beta)$. In particular, $VBQ(\Psi,\beta)$ is a virtual link invariant.
\end{theorem}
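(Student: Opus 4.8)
The plan is to show that the automorphism $\Theta$ of the free virtual biquandle $(FVBQ_n,f,R)$ descends to an isomorphism between the two quotients $VBQ(\Phi,\beta)$ and $VBQ(\Psi,\beta)$. The ingredients are already in place in the excerpt: the conjugation identity $\Psi(\beta)=\Theta\circ\Phi(\beta)\circ\Theta^{-1}$, equivalently $\Theta\circ\Phi(\beta)=\Psi(\beta)\circ\Theta$; the fact that $\Theta$ is a virtual biquandle automorphism; and the commutation of $\Phi(\beta)$ (hence of $\Psi(\beta)$) with $f$, which follows from $\Phi(\sigma_i)\circ f=f\circ\Phi(\sigma_i)$ and $\Phi(\rho_i)\circ f=f\circ\Phi(\rho_i)$ on the generators of $VB_n$.

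First I would verify that $\Theta$ carries each defining relation of $VBQ(\Phi,\beta)$ into the congruence $\mathcal{R}$ defining $VBQ(\Psi,\beta)$. Applying $\Theta$ to the relation $\Phi(\beta)(x_i)=x_i$ and using $\Theta\circ\Phi(\beta)=\Psi(\beta)\circ\Theta$ gives $\Psi(\beta)(\Theta(x_i))=\Theta(x_i)$. Since $\Theta(x_i)=f^{n-i}(x_i)$ and $\Psi(\beta)$ commutes with $f$, the left-hand side equals $f^{n-i}(\Psi(\beta)(x_i))$. In $VBQ(\Psi,\beta)$ we have $\Psi(\beta)(x_i)=x_i$, so $f^{n-i}(\Psi(\beta)(x_i))=f^{n-i}(x_i)=\Theta(x_i)$, where I use that a virtual biquandle congruence is preserved by $f$. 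Hence $\Theta(\Phi(\beta)(x_i))$ and $\Theta(x_i)$ represent the same element of $VBQ(\Psi,\beta)$.

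Consequently $\Theta$ induces a well-defined virtual biquandle homomorphism $\overline{\Theta}\colon VBQ(\Phi,\beta)\to VBQ(\Psi,\beta)$. Running the identical argument with the roles of $\Phi$ and $\Psi$ interchanged — using $\Theta^{-1}\circ\Psi(\beta)=\Phi(\beta)\circ\Theta^{-1}$ and $\Theta^{-1}(x_i)=f^{i-n}(x_i)$ — shows that $\Theta^{-1}$ descends to $\overline{\Theta^{-1}}\colon VBQ(\Psi,\beta)\to VBQ(\Phi,\beta)$. Because $\Theta$ and $\Theta^{-1}$ are mutually inverse on $FVBQ_n$, the induced maps are mutually inverse on the quotients, so $\overline{\Theta}$ is an isomorphism. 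The step demanding the most care is this well-definedness check, namely that applying $f^{n-i}$ to the relation $\Psi(\beta)(x_i)=x_i$ keeps us inside the defining congruence; this is precisely where the commutation of $\Phi(\beta)$ and $\Psi(\beta)$ with $f$, together with the $f$-closedness of virtual biquandle congruences, is essential.

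Finally, for the \emph{in particular} assertion, I would combine the isomorphism $VBQ(\Psi,\beta)\cong VBQ(\Phi,\beta)$ just established with the preceding theorem, which gives $VBQ(\Phi,\beta)\cong VBQ(\Phi,\beta')$ whenever $\cl(\beta)$ and $\cl(\beta')$ represent the same virtual link. Chaining the isomorphisms $VBQ(\Psi,\beta)\cong VBQ(\Phi,\beta)\cong VBQ(\Phi,\beta')\cong VBQ(\Psi,\beta')$ shows that $VBQ(\Psi,\beta)$ depends only on the virtual link $\cl(\beta)$, and is therefore a virtual link invariant.
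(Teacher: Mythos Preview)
Your proof is correct and follows essentially the same approach as the paper: both arguments use the automorphism $\Theta$ of $FVBQ_n$, the conjugation identity $\Psi(\beta)=\Theta\circ\Phi(\beta)\circ\Theta^{-1}$, and the commutation of $\Psi(\beta)$ with $f$ to pass from the relations $\Phi(\beta)(x_i)=x_i$ to the relations $\Psi(\beta)(x_i)=x_i$. The paper packages this as a chain of presentation isomorphisms, while you phrase it as an explicit check that $\Theta$ and $\Theta^{-1}$ descend to mutually inverse maps on the quotients; these are the same argument in different clothing.
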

\begin{proof}
For $\beta \in VB_n$, the virtual biquandle $VBQ(\Phi,\beta)$ has a following presentation

$$
\langle x_1, \ldots, x_n, f,R~|~\Phi(\beta)(x_i)=x_i \textrm{ for } 1 \leq i \leq n \rangle.
$$

Consider the automorphism $\Theta: FVBQ_n \to FVBQ_n$ as defined above. Since $\Theta$ is an automorphism, thus

\begin{align*}
VBQ(\Phi,\beta)\cong &\langle x_1, \ldots, x_n,f, R~|~\Theta(\Phi(\beta)(x_i))=\Theta(x_i) \textrm{ for } 1 \leq i \leq n \rangle\\
\cong & \langle x_1, \ldots, x_n,f, R~|~\Theta\circ\Phi(\beta)\circ\Theta^{-1}(\Theta(x_i))=\Theta(x_i) \textrm{ for } 1 \leq i \leq n \rangle\\
\cong & \langle x_1, \ldots, x_n,f, R~|~\Psi(\beta)(\Theta(x_i))=\Theta(x_i) \textrm{ for } 1 \leq i \leq n \rangle\\
\cong & \langle x_1, \ldots, x_n,f, R~|~\Psi(\beta)(f^{n-i}(x_i))=f^{n-i}(x_i) \textrm{ for } 1 \leq i \leq n \rangle\\
\cong & \langle x_1, \ldots, x_n,f, R~|~\Psi(\beta)(x_i)=x_i \textrm{ for } 1 \leq i \leq n \rangle\\
\cong &VBQ(\Psi, \beta).
\end{align*}
This concludes the proof.
\end{proof}

Let $L$ be an oriented virtual link represented by a virtual link diagram $D$. Then we associate a virtual biquandle $(X, f', R')$ to $L$ defined as the quotient of a free virtual biquandle, which is generated by the labels on the set of semiarcs in the diagram $D$, by the equivalence relation induced by the relations at the crossings in $D$ as shown in Figure \ref{fig:new_labeling_virtual_crossing_rules}.

\begin{figure}[H]

\tikzset{every picture/.style={line width=0.75pt}} 

\begin{tikzpicture}[x=0.75pt,y=0.75pt,yscale=-1,xscale=1]

\draw    (50,50) -- (90,89.67) ;
\draw    (110,109.67) -- (147.89,147.87) ;
\draw [shift={(150,150)}, rotate = 225.24] [fill={rgb, 255:red, 0; green, 0; blue, 0 }  ][line width=0.08]  [draw opacity=0] (10.72,-5.15) -- (0,0) -- (10.72,5.15) -- (7.12,0) -- cycle    ;
\draw    (150,50) -- (52.12,147.88) ;
\draw [shift={(50,150)}, rotate = 315] [fill={rgb, 255:red, 0; green, 0; blue, 0 }  ][line width=0.08]  [draw opacity=0] (10.72,-5.15) -- (0,0) -- (10.72,5.15) -- (7.12,0) -- cycle    ;
\draw    (252.65,148.06) -- (290,109.67) ;
\draw [shift={(250.56,150.21)}, rotate = 314.21] [fill={rgb, 255:red, 0; green, 0; blue, 0 }  ][line width=0.08]  [draw opacity=0] (10.72,-5.15) -- (0,0) -- (10.72,5.15) -- (7.12,0) -- cycle    ;
\draw    (310,89.67) -- (350,50) ;
\draw    (250,50) -- (293.37,91.86) -- (347.91,147.85) ;
\draw [shift={(350,150)}, rotate = 225.76] [fill={rgb, 255:red, 0; green, 0; blue, 0 }  ][line width=0.08]  [draw opacity=0] (10.72,-5.15) -- (0,0) -- (10.72,5.15) -- (7.12,0) -- cycle    ;
\draw    (450,49.67) -- (547.88,147.88) ;
\draw [shift={(550,150)}, rotate = 225.1] [fill={rgb, 255:red, 0; green, 0; blue, 0 }  ][line width=0.08]  [draw opacity=0] (10.72,-5.15) -- (0,0) -- (10.72,5.15) -- (7.12,0) -- cycle    ;
\draw    (550,50) -- (452.12,147.88) ;
\draw [shift={(450,150)}, rotate = 315] [fill={rgb, 255:red, 0; green, 0; blue, 0 }  ][line width=0.08]  [draw opacity=0] (10.72,-5.15) -- (0,0) -- (10.72,5.15) -- (7.12,0) -- cycle    ;
\draw   (493.5,99.67) .. controls (493.5,96.08) and (496.41,93.17) .. (500,93.17) .. controls (503.59,93.17) and (506.5,96.08) .. (506.5,99.67) .. controls (506.5,103.26) and (503.59,106.17) .. (500,106.17) .. controls (496.41,106.17) and (493.5,103.26) .. (493.5,99.67) -- cycle ;

\draw (45,25.07) node [anchor=north west][inner sep=0.75pt]    {$x$};
\draw (145,25.73) node [anchor=north west][inner sep=0.75pt]    {$y$};
\draw (11.33,155.4) node [anchor=north west][inner sep=0.75pt]    {$R_{1}^{'}( x,\ f'( y))$};
\draw (111,152.73) node [anchor=north west][inner sep=0.75pt]    {$R_{2}^{'}\left( f'^{-1}( x) ,y\right)$};
\draw (320,151.73) node [anchor=north west][inner sep=0.75pt]    {$\overline{R_{2}^{'}}\left( f'^{-1}( x) ,y\right)$};
\draw (223,151.73) node [anchor=north west][inner sep=0.75pt]    {$\overline{R_{1}^{'}}( x,\ f'( y))$};
\draw (244,25.73) node [anchor=north west][inner sep=0.75pt]    {$x$};
\draw (344,25.4) node [anchor=north west][inner sep=0.75pt]    {$y$};
\draw (445.67,25.73) node [anchor=north west][inner sep=0.75pt]    {$x$};
\draw (544.67,26.4) node [anchor=north west][inner sep=0.75pt]    {$y$};
\draw (447,155.4) node [anchor=north west][inner sep=0.75pt]    {$y$};
\draw (543.33,155.07) node [anchor=north west][inner sep=0.75pt]    {$x$};

\end{tikzpicture}
\caption{New Labeling rules.} \label{fig:new_labeling_virtual_crossing_rules}

\end{figure}
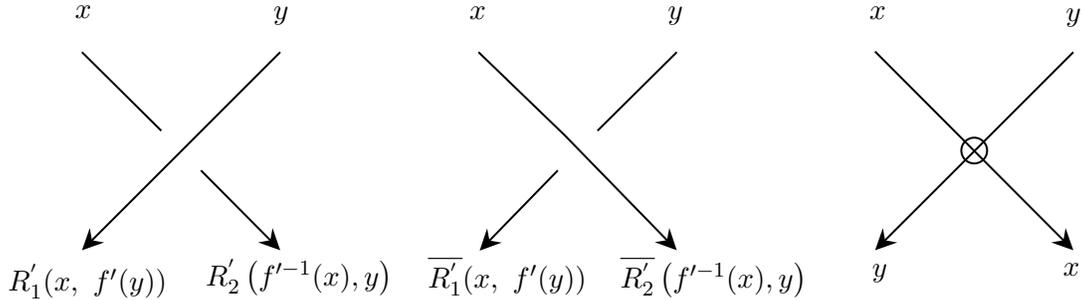

\begin{theorem}\label{thm:virtual_biquandle_isomorphism}
The virtual biquandle $(X,f',R')$ is isomorphic to the fundamental virtual biquandle $(VBQ(L), f, R)$.
\end{theorem}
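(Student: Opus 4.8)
The plan is to express both virtual biquandles as braid-closure virtual biquandles and then apply Theorem \ref{thm:both_virtual_biquandles_are_same}. By \cite{MR2351010} I first fix a virtual braid $\beta \in VB_n$ whose closure $\cl(\beta)$ represents $L$. Since $(VBQ(L), f, R)$ is a diagram invariant, I may compute it from the diagram $D = \cl(\beta)$: reading the old rules of Figure \ref{fig:old_virtual_crossing_rules} along the strands recovers precisely the assignments $\Phi(\sigma_i), \Phi(\rho_i)$ (and $\Phi(\sigma_i^{-1})$ via $\overbar{R_1}, \overbar{R_2}$), so that closing up the braid imposes the relations $\Phi(\beta)(x_i) = x_i$. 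Hence $(VBQ(L), f, R) \cong VBQ(\Phi, \beta)$, which is in fact exactly the content of the theorem stated just before Theorem \ref{thm:both_virtual_biquandles_are_same}.

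The key step is the parallel identification $(X, f', R') \cong VBQ(\Psi, \beta)$. Here I would read the new rules of Figure \ref{fig:new_labeling_virtual_crossing_rules} along $\cl(\beta)$: a classical crossing on strands $i, i+1$ contributes $x_i \mapsto R_1(x_i, f(x_{i+1}))$ and $x_{i+1} \mapsto R_2(f^{-1}(x_i), x_{i+1})$, while a virtual crossing contributes only the bare transposition $x_i \leftrightarrow x_{i+1}$. These coincide exactly with $\Psi(\sigma_i)$ and $\Psi(\rho_i)$ (with $\Psi(\sigma_i^{-1})$ given by the barred operators). Consequently the closure identifications produce precisely the relations $\Psi(\beta)(x_i) = x_i$, which identifies the new-rule virtual biquandle $(X, f', R')$ with $VBQ(\Psi, \beta)$.

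Combining the two identifications with Theorem \ref{thm:both_virtual_biquandles_are_same}, which gives $VBQ(\Phi, \beta) \cong VBQ(\Psi, \beta)$ via conjugation by the automorphism $\Theta(x_i) = f^{n-i}(x_i)$, yields $(VBQ(L), f, R) \cong (X, f', R')$. I expect the main obstacle to be the bookkeeping in the second identification: verifying that reading the new rules around the closure of $\beta$ produces exactly the $\Psi$-relations and introduces no spurious ones, and checking that the unary map $f'$ is carried to $f$ under the isomorphism rather than to some twist of it. The latter relies on the fact that $\Theta$ commutes with $f$, recorded just before Theorem \ref{thm:both_virtual_biquandles_are_same}; diagram-independence of the new-rule construction then follows a posteriori from its isomorphism with the established invariant $(VBQ(L), f, R)$.
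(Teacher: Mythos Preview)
Your proposal is correct and follows essentially the same route as the paper: choose a braid $\beta$ with $\cl(\beta)=L$, identify $(VBQ(L),f,R)\cong VBQ(\Phi,\beta)$ and $(X,f',R')\cong VBQ(\Psi,\beta)$ by reading the respective labeling rules along the closed braid, and then invoke Theorem~\ref{thm:both_virtual_biquandles_are_same}. The paper's proof is terser, simply asserting the second identification as an ``observation,'' while you spell out the strand-by-strand matching with $\Psi(\sigma_i)$ and $\Psi(\rho_i)$; your remarks about $\Theta$ commuting with $f$ and diagram-independence following a posteriori are accurate and do not depart from the paper's logic.
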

\begin{proof}
The proof follows from Theorem \ref{thm:both_virtual_biquandles_are_same} with the observation that $(X, f', R')$ is isomorphic to the virtual biquandle $(VBQ(\Psi, \beta),f, R)$, where $\beta \in VB_n$ such that $\cl(\beta)$ is $L$.
\end{proof}

Hence, the fundamental virtual biquandle can be retrieved from just classical crossings in a virtual link.

A {\it Gauss diagram} consists of a finite number of circles oriented anticlockwise with a finite number of
signed arrows whose heads and tails lie on the circles. For every oriented virtual link diagram $L$ one can construct a Gauss diagram, where the arrows corresponds to the classical crossings in $L$. Figure \ref{fig:depiction_of_labeling_rules_on_Gauss_diagrms} depicts the encoding of new labeling rules (from Figure \ref{fig:new_labeling_virtual_crossing_rules}) on Gauss diagrams. Thus, we can recover the fundamental virtual biquandle for a given virtual link $L$ directly from a Gauss diagram representing $L$.

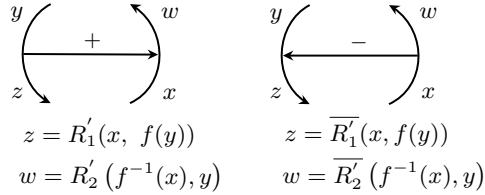
\begin{figure}[H]

\tikzset{every picture/.style={line width=0.75pt}} 

\begin{tikzpicture}[x=0.75pt,y=0.75pt,yscale=-1,xscale=1]

\draw    (100.93,51) .. controls (83.42,60.23) and (81.72,88.62) .. (98.44,99.57) ;
\draw [shift={(100.93,101)}, rotate = 206.57] [fill={rgb, 255:red, 0; green, 0; blue, 0 }  ][line width=0.08]  [draw opacity=0] (5.36,-2.57) -- (0,0) -- (5.36,2.57) -- (3.56,0) -- cycle    ;
\draw    (143.51,52.64) .. controls (160.86,64.96) and (159.71,92.41) .. (140.93,101) ;
\draw [shift={(140.93,51)}, rotate = 29.54] [fill={rgb, 255:red, 0; green, 0; blue, 0 }  ][line width=0.08]  [draw opacity=0] (5.36,-2.57) -- (0,0) -- (5.36,2.57) -- (3.56,0) -- cycle    ;
\draw    (231.56,51.5) .. controls (214.05,60.73) and (212.35,89.12) .. (229.07,100.07) ;
\draw [shift={(231.56,101.5)}, rotate = 206.57] [fill={rgb, 255:red, 0; green, 0; blue, 0 }  ][line width=0.08]  [draw opacity=0] (5.36,-2.57) -- (0,0) -- (5.36,2.57) -- (3.56,0) -- cycle    ;
\draw    (274.13,53.14) .. controls (291.49,65.46) and (290.34,92.91) .. (271.56,101.5) ;
\draw [shift={(271.56,51.5)}, rotate = 29.54] [fill={rgb, 255:red, 0; green, 0; blue, 0 }  ][line width=0.08]  [draw opacity=0] (5.36,-2.57) -- (0,0) -- (5.36,2.57) -- (3.56,0) -- cycle    ;
\draw    (87.13,76.2) -- (152.13,76.49) ;
\draw [shift={(155.13,76.5)}, rotate = 180.25] [fill={rgb, 255:red, 0; green, 0; blue, 0 }  ][line width=0.08]  [draw opacity=0] (5.36,-2.57) -- (0,0) -- (5.36,2.57) -- (3.56,0) -- cycle    ;
\draw    (286.03,77) -- (221.03,77) ;
\draw [shift={(218.03,77)}, rotate = 360] [fill={rgb, 255:red, 0; green, 0; blue, 0 }  ][line width=0.08]  [draw opacity=0] (5.36,-2.57) -- (0,0) -- (5.36,2.57) -- (3.56,0) -- cycle    ;

\draw (116.13,64.4) node [anchor=north west][inner sep=0.75pt]  [font=\scriptsize]  {$+$};
\draw (250.43,65.3) node [anchor=north west][inner sep=0.75pt]  [font=\scriptsize]  {$-$};
\draw (79.13,51) node [anchor=north west][inner sep=0.75pt]  [font=\footnotesize]  {$y$};
\draw (210.03,52.1) node [anchor=north west][inner sep=0.75pt]  [font=\footnotesize]  {$y$};
\draw (79.53,91.8) node [anchor=north west][inner sep=0.75pt]  [font=\footnotesize]  {$z$};
\draw (210.03,91.9) node [anchor=north west][inner sep=0.75pt]  [font=\footnotesize]  {$z$};
\draw (155.93,92) node [anchor=north west][inner sep=0.75pt]  [font=\footnotesize]  {$x$};
\draw (286.43,91.7) node [anchor=north west][inner sep=0.75pt]  [font=\footnotesize]  {$x$};
\draw (155.13,51.4) node [anchor=north west][inner sep=0.75pt]  [font=\footnotesize]  {$w$};
\draw (285.43,51.5) node [anchor=north west][inner sep=0.75pt]  [font=\footnotesize]  {$w$};
\draw (85.43,106.9) node [anchor=north west][inner sep=0.75pt]  [font=\footnotesize]  {$z=R_{1}^{'}( x,\ f( y))$};
\draw (83.43,127.9) node [anchor=north west][inner sep=0.75pt]  [font=\footnotesize]  {$w=R_{2}^{'}\left( f^{-1}( x) ,y\right)$};
\draw (217.43,105.9) node [anchor=north west][inner sep=0.75pt]  [font=\footnotesize]  {$z=\overline{R_{1}^{'}}( x,f( y))$};
\draw (216.43,126.9) node [anchor=north west][inner sep=0.75pt]  [font=\footnotesize]  {$w=\overline{R_{2}^{'}}\left( f^{-1}( x) ,y\right)$};

\end{tikzpicture}
\caption{Depiction of new labeling rules on Gauss diagrams.}
\label{fig:depiction_of_labeling_rules_on_Gauss_diagrms}
\end{figure}
 
\section{Concluding remarks}\label{sec:conclusion}
In this article, using virtual braid group representations, we proved that the coloring number invariant of virtual links, derived from virtual biquandles, can be obtained from colorings by biquandles. Moreover, Theorem \ref{thm:virtual_biquandle_isomorphism} suggests that any invariant stemming from the fundamental virtual biquandle can be extracted solely from classical crossings in virtual link diagrams. This suggests that classical crossings in virtual links may encompass more information than previously acknowledged. In our ongoing work \cite{2024arXiv240101533E}, we are integrating the unary operator of virtual biquandles with Alexander numbering of links and standard biquandle coloring, thereby enhancing the state-sum invariant. In our forthcoming research, we aim to delve further into the ``virtualized" invariants, such as the Alexander polynomial as defined in \cite{MR4284604, MR1280610, MR1822148}, and the state-sum invariant derived from virtual biquandles \cite{MR3982044, MR2515811}, shedding further light on their intricacies.

\section*{Acknowledgment}
ME is partially supported by Simons Foundation collaboration grant 712462.  MS is supported by the Fulbright-Nehru postdoctoral fellowship.

\begin{filecontents*}[overwrite]{references1.bib}

@article {MR2351010,
    AUTHOR = {Kamada, Seiichi},
     TITLE = {Braid presentation of virtual knots and welded knots},
   JOURNAL = {Osaka J. Math.},
  FJOURNAL = {Osaka Journal of Mathematics},
    VOLUME = {44},
      YEAR = {2007},
    NUMBER = {2},
     PAGES = {441--458},
      ISSN = {0030-6126},
   MRCLASS = {57M25 (20F36)},
  MRNUMBER = {2351010},
MRREVIEWER = {Sabin\ Cautis},
       URL = {http://projecteuclid.org/euclid.ojm/1183667989},
}

@incollection {MR2371718,
    AUTHOR = {Kauffman, Louis and Lambropoulou, Sofia},
     TITLE = {The {$L$}-move and virtual braids},
 BOOKTITLE = {Intelligence of low dimensional topology 2006},
    SERIES = {Ser. Knots Everything},
    VOLUME = {40},
     PAGES = {133--142},
 PUBLISHER = {World Sci. Publ., Hackensack, NJ},
      YEAR = {2007},
   MRCLASS = {57M25},
  MRNUMBER = {2371718},
       DOI = {10.1142/9789812770967\_0017},
       URL = {https://doi.org/10.1142/9789812770967_0017},
}

@article {MR2128049,
    AUTHOR = {Kauffman, Louis H. and Lambropoulou, Sofia},
     TITLE = {Virtual braids},
   JOURNAL = {Fund. Math.},
  FJOURNAL = {Fundamenta Mathematicae},
    VOLUME = {184},
      YEAR = {2004},
     PAGES = {159--186},
      ISSN = {0016-2736},
   MRCLASS = {57M25 (20F36)},
  MRNUMBER = {2128049},
MRREVIEWER = {Seiichi Kamada},
       DOI = {10.4064/fm184-0-11},
       URL = {https://doi.org/10.4064/fm184-0-11},
}

@article {MR1997331,
    AUTHOR = {Kuperberg, Greg},
     TITLE = {What is a virtual link?},
   JOURNAL = {Algebr. Geom. Topol.},
  FJOURNAL = {Algebraic \& Geometric Topology},
    VOLUME = {3},
      YEAR = {2003},
     PAGES = {587--591},
      ISSN = {1472-2747,1472-2739},
   MRCLASS = {57M25 (57M15 57M27)},
  MRNUMBER = {1997331},
MRREVIEWER = {Masahico\ Saito},
       DOI = {10.2140/agt.2003.3.587},
       URL = {https://doi.org/10.2140/agt.2003.3.587},
}

@article {MR1749502,
    AUTHOR = {Kamada, Naoko and Kamada, Seiichi},
     TITLE = {Abstract link diagrams and virtual knots},
   JOURNAL = {J. Knot Theory Ramifications},
  FJOURNAL = {Journal of Knot Theory and its Ramifications},
    VOLUME = {9},
      YEAR = {2000},
    NUMBER = {1},
     PAGES = {93--106},
      ISSN = {0218-2165,1793-6527},
   MRCLASS = {57M25 (57M27)},
  MRNUMBER = {1749502},
       DOI = {10.1142/S0218216500000049},
       URL = {https://doi.org/10.1142/S0218216500000049},
}

@incollection {MR1905687,
    AUTHOR = {Carter, J. Scott and Kamada, Seiichi and Saito, Masahico},
     TITLE = {Stable equivalence of knots on surfaces and virtual knot
              cobordisms},
      NOTE = {Knots 2000 Korea, Vol. 1 (Yongpyong)},
   JOURNAL = {J. Knot Theory Ramifications},
  FJOURNAL = {Journal of Knot Theory and its Ramifications},
    VOLUME = {11},
      YEAR = {2002},
    NUMBER = {3},
     PAGES = {311--322},
      ISSN = {0218-2165,1793-6527},
   MRCLASS = {57M25},
  MRNUMBER = {1905687},
MRREVIEWER = {Swatee\ Naik},
       DOI = {10.1142/S0218216502001639},
       URL = {https://doi.org/10.1142/S0218216502001639},
}

@article {MR3597250,
    AUTHOR = {Bardakov, Valeriy G. and Mikhalchishina, Yuliya A. and
              Neshchadim, Mikhail V.},
     TITLE = {Representations of virtual braids by automorphisms and virtual
              knot groups},
   JOURNAL = {J. Knot Theory Ramifications},
  FJOURNAL = {Journal of Knot Theory and its Ramifications},
    VOLUME = {26},
      YEAR = {2017},
    NUMBER = {1},
     PAGES = {1750003, 17},
      ISSN = {0218-2165,1793-6527},
   MRCLASS = {57M27 (20F36 57M25)},
  MRNUMBER = {3597250},
MRREVIEWER = {Jie\ Wu},
       DOI = {10.1142/S0218216517500031},
       URL = {https://doi.org/10.1142/S0218216517500031},
}

@article {MR2493369,
    AUTHOR = {Bardakov, Valerij G. and Bellingeri, Paolo},
     TITLE = {Combinatorial properties of virtual braids},
   JOURNAL = {Topology Appl.},
  FJOURNAL = {Topology and its Applications},
    VOLUME = {156},
      YEAR = {2009},
    NUMBER = {6},
     PAGES = {1071--1082},
      ISSN = {0166-8641,1879-3207},
   MRCLASS = {20F36 (20F14 57M05)},
  MRNUMBER = {2493369},
MRREVIEWER = {Daan\ Krammer},
       DOI = {10.1016/j.topol.2008.10.002},
       URL = {https://doi.org/10.1016/j.topol.2008.10.002},
}

@article {MR1721925,
 AUTHOR = {Kauffman, Louis H.},
 TITLE = {Virtual knot theory},
 JOURNAL = {European J. Combin.},
 FJOURNAL = {European Journal of Combinatorics},
 VOLUME = {20},
 YEAR = {1999},
 NUMBER = {7},
 PAGES = {663--690},
 ISSN = {0195-6698},
 MRCLASS = {57M25 (57M27)},
 MRNUMBER = {1721925},
 MRREVIEWER = {Olivier Collin},
 DOI = {10.1006/eujc.1999.0314},
 URL = {https://doi.org/10.1006/eujc.1999.0314},
}

@article {MR3982044,
    AUTHOR = {Farinati, Marco A. and Garc\'{\i}a Galofre, Juliana},
     TITLE = {Virtual link and knot invariants from non-abelian
              {Y}ang-{B}axter 2-cocycle pairs},
   JOURNAL = {Osaka J. Math.},
  FJOURNAL = {Osaka Journal of Mathematics},
    VOLUME = {56},
      YEAR = {2019},
    NUMBER = {3},
     PAGES = {525--547},
      ISSN = {0030-6126},
   MRCLASS = {57K18 (57K10)},
  MRNUMBER = {3982044},
MRREVIEWER = {Heather A. Dye},
       URL = {https://projecteuclid.org/euclid.ojm/1563242423},
}

@article {MR2542696,
    AUTHOR = {Fenn, Roger},
     TITLE = {Biquandles and their application to virtual knots and links},
   JOURNAL = {J. Knot Theory Ramifications},
  FJOURNAL = {Journal of Knot Theory and its Ramifications},
    VOLUME = {18},
      YEAR = {2009},
    NUMBER = {6},
     PAGES = {785--789},
      ISSN = {0218-2165},
   MRCLASS = {57M27 (57M25)},
  MRNUMBER = {2542696},
       DOI = {10.1142/S0218216509007178},
       URL = {https://doi.org/10.1142/S0218216509007178},
}

@article {MR2100870,
    AUTHOR = {Fenn, Roger and Jordan-Santana, Mercedes and Kauffman, Louis},
     TITLE = {Biquandles and virtual links},
   JOURNAL = {Topology Appl.},
  FJOURNAL = {Topology and its Applications},
    VOLUME = {145},
      YEAR = {2004},
    NUMBER = {1-3},
     PAGES = {157--175},
      ISSN = {0166-8641},
   MRCLASS = {57M27 (57M25)},
  MRNUMBER = {2100870},
MRREVIEWER = {Dale P. O. Rolfsen},
       DOI = {10.1016/j.topol.2004.06.008},
       URL = {https://doi.org/10.1016/j.topol.2004.06.008},
}

@article {MR2191942,
    AUTHOR = {Kauffman, Louis H. and Manturov, Vassily O.},
     TITLE = {Virtual biquandles},
   JOURNAL = {Fund. Math.},
  FJOURNAL = {Fundamenta Mathematicae},
    VOLUME = {188},
      YEAR = {2005},
     PAGES = {103--146},
      ISSN = {0016-2736},
   MRCLASS = {57M25 (57M27)},
  MRNUMBER = {2191942},
MRREVIEWER = {Heather A. Dye},
       DOI = {10.4064/fm188-0-6},
       URL = {https://doi.org/10.4064/fm188-0-6},
}

@article {MR2128041,
    AUTHOR = {Carter, J. Scott and Elhamdadi, Mohamed and Saito, Masahico},
     TITLE = {Homology theory for the set-theoretic {Y}ang-{B}axter equation
              and knot invariants from generalizations of quandles},
   JOURNAL = {Fund. Math.},
  FJOURNAL = {Fundamenta Mathematicae},
    VOLUME = {184},
      YEAR = {2004},
     PAGES = {31--54},
      ISSN = {0016-2736},
   MRCLASS = {57M25 (55N35)},
  MRNUMBER = {2128041},
MRREVIEWER = {Seiichi Kamada},
       DOI = {10.4064/fm184-0-3},
       URL = {https://doi.org/10.4064/fm184-0-3},
}

@article {MR2515811,
    AUTHOR = {Ceniceros, Jose and Nelson, Sam},
     TITLE = {Virtual {Y}ang-{B}axter cocycle invariants},
   JOURNAL = {Trans. Amer. Math. Soc.},
  FJOURNAL = {Transactions of the American Mathematical Society},
    VOLUME = {361},
      YEAR = {2009},
    NUMBER = {10},
     PAGES = {5263--5283},
      ISSN = {0002-9947},
   MRCLASS = {57M27 (18G60)},
  MRNUMBER = {2515811},
       DOI = {10.1090/S0002-9947-09-04751-5},
       URL = {https://doi.org/10.1090/S0002-9947-09-04751-5},
}

@article {MR1273784,
 AUTHOR = {Wada, Masaaki},
 TITLE = {Twisted {A}lexander polynomial for finitely presentable
  groups},
 JOURNAL = {Topology},
 FJOURNAL = {Topology. An International Journal of Mathematics},
 VOLUME = {33},
 YEAR = {1994},
 NUMBER = {2},
 PAGES = {241--256},
 ISSN = {0040-9383},
 MRCLASS = {57M25},
 MRNUMBER = {1273784},
 MRREVIEWER = {Charles Livingston},
 DOI = {10.1016/0040-9383(94)90013-2},
 URL = {https://doi.org/10.1016/0040-9383(94)90013-2},
}

@article {MR2549477,
 AUTHOR = {Carter, J. Scott and Silver, Daniel S. and Williams, Susan G.
  and Elhamdadi, Mohamed and Saito, Masahico},
 TITLE = {Virtual knot invariants from group biquandles and their
  cocycles},
 JOURNAL = {J. Knot Theory Ramifications},
 FJOURNAL = {Journal of Knot Theory and its Ramifications},
 VOLUME = {18},
 YEAR = {2009},
 NUMBER = {7},
 PAGES = {957--972},
 ISSN = {0218-2165},
 MRCLASS = {57M27 (57M25)},
 MRNUMBER = {2549477},
 MRREVIEWER = {Seiichi Kamada},
 DOI = {10.1142/S0218216509007269},
 URL = {https://doi.org/10.1142/S0218216509007269},
}

@article {MR3666513,
 AUTHOR = {Fenn, Roger Andrew and Bartholomew, Andrew},
 TITLE = {Erratum: {B}iquandles of small size and some invariants of
  virtual and welded knots},
 JOURNAL = {J. Knot Theory Ramifications},
 FJOURNAL = {Journal of Knot Theory and its Ramifications},
 VOLUME = {26},
 YEAR = {2017},
 NUMBER = {8},
 PAGES = {1792002, 11},
 ISSN = {0218-2165},
 MRCLASS = {57M25},
 MRNUMBER = {3666513},
 DOI = {10.1142/S021821651792002X},
 URL = {https://doi.org/10.1142/S021821651792002X},
}

@article {MR3055555,
 AUTHOR = {Crans, Alissa S. and Henrich, Allison and Nelson, Sam},
 TITLE = {Polynomial knot and link invariants from the virtual
  biquandle},
 JOURNAL = {J. Knot Theory Ramifications},
 FJOURNAL = {Journal of Knot Theory and its Ramifications},
 VOLUME = {22},
 YEAR = {2013},
 NUMBER = {4},
 PAGES = {134004, 15},
 ISSN = {0218-2165},
 MRCLASS = {57M27 (57M25)},
 MRNUMBER = {3055555},
 MRREVIEWER = {Neil R. Nicholson},
 DOI = {10.1142/S021821651340004X},
 URL = {https://doi.org/10.1142/S021821651340004X},
}

@article {MR2191949,
 AUTHOR = {Fenn, Roger and Kauffman, Louis H. and Manturov, Vassily O.},
 TITLE = {Virtual knot theory---unsolved problems},
 JOURNAL = {Fund. Math.},
 FJOURNAL = {Fundamenta Mathematicae},
 VOLUME = {188},
 YEAR = {2005},
 PAGES = {293--323},
 ISSN = {0016-2736},
 MRCLASS = {57M25},
 MRNUMBER = {2191949},
 MRREVIEWER = {Charles Livingston},
 DOI = {10.4064/fm188-0-13},
 URL = {https://doi.org/10.4064/fm188-0-13},
}

@article {MR4352636,
 AUTHOR = {Bardakov, Valeriy G. and Neshchadim, Mikhail V. and Singh,
  Manpreet},
 TITLE = {Virtually symmetric representations and marked {G}auss
  diagrams},
 JOURNAL = {Topology Appl.},
 FJOURNAL = {Topology and its Applications},
 VOLUME = {306},
 YEAR = {2022},
 PAGES = {Paper No. 107936, 24},
 ISSN = {0166-8641},
 MRCLASS = {57K12 (05C10 20F36)},
 MRNUMBER = {4352636},
 MRREVIEWER = {Qingying Deng},
 DOI = {10.1016/j.topol.2021.107936},
 URL = {https://doi.org/10.1016/j.topol.2021.107936},
}

@article {MR4445569,
 AUTHOR = {Bardakov, Valeriy and Emel'yanenkov, Ivan and Ivanov, Maxim
  and Kozlovskaya, Tatyana and Nasybullov, Timur and Vesnin,
  Andrei},
 TITLE = {Virtual and universal braid groups, their quotients and
  representations},
 JOURNAL = {J. Group Theory},
 FJOURNAL = {Journal of Group Theory},
 VOLUME = {25},
 YEAR = {2022},
 NUMBER = {4},
 PAGES = {679--712},
 ISSN = {1433-5883},
 MRCLASS = {20F36 (20C15)},
 MRNUMBER = {4445569},
 MRREVIEWER = {Mohammad N. Abdulrahim},
 DOI = {10.1515/jgth-2021-0114},
 URL = {https://doi.org/10.1515/jgth-2021-0114},
}

@article {MR4196804,
 AUTHOR = {Bardakov, V. G. and Nasybullov, T. R.},
 TITLE = {Multi-switches, representations of virtual braids and
  invariants of virtual links},
 JOURNAL = {Algebra Logika},
 FJOURNAL = {Algebra i Logika. Institut Diskretno\u{\i} Matematiki i
  Informatiki},
 VOLUME = {59},
 YEAR = {2020},
 NUMBER = {4},
 PAGES = {500--506},
 ISSN = {0373-9252},
 MRCLASS = {20F36 (57K12)},
 MRNUMBER = {4196804},
 MRREVIEWER = {Pierre A. Lochak},
 DOI = {10.33048/alglog.2020.59.406},
 URL = {https://doi.org/10.33048/alglog.2020.59.406},
}

@article {MR2384830,
    AUTHOR = {Hrencecin, David and Kauffman, Louis H.},
     TITLE = {Biquandles for virtual knots},
   JOURNAL = {J. Knot Theory Ramifications},
  FJOURNAL = {Journal of Knot Theory and its Ramifications},
    VOLUME = {16},
      YEAR = {2007},
    NUMBER = {10},
     PAGES = {1361--1382},
      ISSN = {0218-2165},
   MRCLASS = {57M27 (57M25)},
  MRNUMBER = {2384830},
MRREVIEWER = {Heather A. Dye},
       DOI = {10.1142/S0218216507005841},
       URL = {https://doi.org/10.1142/S0218216507005841},
}

@article {MR2128040,
 AUTHOR = {Budden, Stephen and Fenn, Roger},
 TITLE = {The equation {$[B,(A-1)(A,B)]=0$} and virtual knots and links},
 JOURNAL = {Fund. Math.},
 FJOURNAL = {Fundamenta Mathematicae},
 VOLUME = {184},
 YEAR = {2004},
 PAGES = {19--29},
 ISSN = {0016-2736},
 MRCLASS = {57M25 (57M27)},
 MRNUMBER = {2128040},
 MRREVIEWER = {Gw\'{e}na\"{e}l Massuyeau},
 DOI = {10.4064/fm184-0-2},
 URL = {https://doi.org/10.4064/fm184-0-2},
}

@article {MR1167178,
 AUTHOR = {Wada, Masaaki},
 TITLE = {Group invariants of links},
 JOURNAL = {Topology},
 FJOURNAL = {Topology. An International Journal of Mathematics},
 VOLUME = {31},
 YEAR = {1992},
 NUMBER = {2},
 PAGES = {399--406},
 ISSN = {0040-9383},
 MRCLASS = {57M25 (20F36)},
 MRNUMBER = {1167178},
 MRREVIEWER = {J\'{o}zef H. Przytycki},
 DOI = {10.1016/0040-9383(92)90029-H},
 URL = {https://doi.org/10.1016/0040-9383(92)90029-H},
}

@ARTICLE{2024arXiv240101533E,
       author = {{Elhamdadi}, Mohamed and {Singh}, Manpreet},
        title = "{Twisted Yang-Baxter sets, cohomology theory, and application to knots}",
      journal = {arXiv e-prints},
     keywords = {Mathematics - Geometric Topology, Mathematics - Quantum Algebra, 57K10, 57K45, 57K12, 16T25, 57T99},
         year = 2024,
        month = jan,
          eid = {arXiv:2401.01533},
        pages = {arXiv:2401.01533},
          doi = {10.48550/arXiv.2401.01533},
archivePrefix = {arXiv},
       eprint = {2401.01533},
 primaryClass = {math.GT},
       adsurl = {https://ui.adsabs.harvard.edu/abs/2024arXiv240101533E},
      adsnote = {Provided by the SAO/NASA Astrophysics Data System}
}

@article {MR4117062,
    AUTHOR = {Bellingeri, Paolo and Paris, Luis},
     TITLE = {Virtual braids and permutations},
   JOURNAL = {Ann. Inst. Fourier (Grenoble)},
  FJOURNAL = {Universit\'{e} de Grenoble. Annales de l'Institut Fourier},
    VOLUME = {70},
      YEAR = {2020},
    NUMBER = {3},
     PAGES = {1341--1362},
      ISSN = {0373-0956},
   MRCLASS = {20F36 (20E36)},
  MRNUMBER = {4117062},
MRREVIEWER = {Jean Fromentin},
       URL = {http://aif.cedram.org/item?id=AIF_2020__70_3_1341_0},
}

@article {MR3597250,
    AUTHOR = {Bardakov, Valeriy G. and Mikhalchishina, Yuliya A. and
              Neshchadim, Mikhail V.},
     TITLE = {Representations of virtual braids by automorphisms and virtual
              knot groups},
   JOURNAL = {J. Knot Theory Ramifications},
  FJOURNAL = {Journal of Knot Theory and its Ramifications},
    VOLUME = {26},
      YEAR = {2017},
    NUMBER = {1},
     PAGES = {1750003, 17},
      ISSN = {0218-2165},
   MRCLASS = {57M27 (20F36 57M25)},
  MRNUMBER = {3597250},
MRREVIEWER = {Jie Wu},
       DOI = {10.1142/S0218216517500031},
       URL = {https://doi.org/10.1142/S0218216517500031},
}

@article {MR3519103,
    AUTHOR = {Bellingeri, Paolo and Cisneros de la Cruz, Bruno A. and Paris,
              Luis},
     TITLE = {A simple solution to the word problem for virtual braid
              groups},
   JOURNAL = {Pacific J. Math.},
  FJOURNAL = {Pacific Journal of Mathematics},
    VOLUME = {283},
      YEAR = {2016},
    NUMBER = {2},
     PAGES = {271--287},
      ISSN = {0030-8730},
   MRCLASS = {20F36 (03B25 20F10 57M27)},
  MRNUMBER = {3519103},
MRREVIEWER = {Volker Gebhardt},
       DOI = {10.2140/pjm.2016.283.271},
       URL = {https://doi.org/10.2140/pjm.2016.283.271},
}

@article {MR2153118,
    AUTHOR = {Dye, H. A. and Kauffman, Louis H.},
     TITLE = {Minimal surface representations of virtual knots and links},
   JOURNAL = {Algebr. Geom. Topol.},
  FJOURNAL = {Algebraic \& Geometric Topology},
    VOLUME = {5},
      YEAR = {2005},
     PAGES = {509--535},
      ISSN = {1472-2747},
   MRCLASS = {57M25 (57M27 57N05)},
  MRNUMBER = {2153118},
MRREVIEWER = {Martin Scharlemann},
       DOI = {10.2140/agt.2005.5.509},
       URL = {https://doi.org/10.2140/agt.2005.5.509},
}

@article {MR1997331,
    AUTHOR = {Kuperberg, Greg},
     TITLE = {What is a virtual link?},
   JOURNAL = {Algebr. Geom. Topol.},
  FJOURNAL = {Algebraic \& Geometric Topology},
    VOLUME = {3},
      YEAR = {2003},
     PAGES = {587--591},
      ISSN = {1472-2747},
   MRCLASS = {57M25 (57M15 57M27)},
  MRNUMBER = {1997331},
MRREVIEWER = {Masahico Saito},
       DOI = {10.2140/agt.2003.3.587},
       URL = {https://doi.org/10.2140/agt.2003.3.587},
}

@article {MR2100692,
    AUTHOR = {Kamada, Naoko},
     TITLE = {Span of the {J}ones polynomial of an alternating virtual link},
   JOURNAL = {Algebr. Geom. Topol.},
  FJOURNAL = {Algebraic \& Geometric Topology},
    VOLUME = {4},
      YEAR = {2004},
     PAGES = {1083--1101},
      ISSN = {1472-2747},
   MRCLASS = {57M27 (57M25)},
  MRNUMBER = {2100692},
MRREVIEWER = {Gw\'{e}na\"{e}l Massuyeau},
       DOI = {10.2140/agt.2004.4.1083},
       URL = {https://doi.org/10.2140/agt.2004.4.1083},
}

@article {MR1914297,
    AUTHOR = {Kamada, Naoko},
     TITLE = {On the {J}ones polynomials of checkerboard colorable virtual
              links},
   JOURNAL = {Osaka J. Math.},
  FJOURNAL = {Osaka Journal of Mathematics},
    VOLUME = {39},
      YEAR = {2002},
    NUMBER = {2},
     PAGES = {325--333},
      ISSN = {0030-6126},
   MRCLASS = {57M27},
  MRNUMBER = {1914297},
MRREVIEWER = {Louis H. Kauffman},
       URL = {http://projecteuclid.org/euclid.ojm/1153492771},
}
 
@article {MR2515811,
    AUTHOR = {Ceniceros, Jose and Nelson, Sam},
     TITLE = {Virtual {Y}ang-{B}axter cocycle invariants},
   JOURNAL = {Trans. Amer. Math. Soc.},
  FJOURNAL = {Transactions of the American Mathematical Society},
    VOLUME = {361},
      YEAR = {2009},
    NUMBER = {10},
     PAGES = {5263--5283},
      ISSN = {0002-9947},
   MRCLASS = {57M27 (18G60)},
  MRNUMBER = {2515811},
       DOI = {10.1090/S0002-9947-09-04751-5},
       URL = {https://doi.org/10.1090/S0002-9947-09-04751-5},
}

@article {MR1916950,
    AUTHOR = {Manturov, V. O.},
     TITLE = {On invariants of virtual links},
   JOURNAL = {Acta Appl. Math.},
  FJOURNAL = {Acta Applicandae Mathematicae},
    VOLUME = {72},
      YEAR = {2002},
    NUMBER = {3},
     PAGES = {295--309},
      ISSN = {0167-8019},
   MRCLASS = {57M25},
  MRNUMBER = {1916950},
       DOI = {10.1023/A:1016258728022},
       URL = {https://doi.org/10.1023/A:1016258728022},
}

@article {MR3640614,
    AUTHOR = {Boden, Hans U. and Gaudreau, Robin and Harper, Eric and Nicas,
              Andrew J. and White, Lindsay},
     TITLE = {Virtual knot groups and almost classical knots},
   JOURNAL = {Fund. Math.},
  FJOURNAL = {Fundamenta Mathematicae},
    VOLUME = {238},
      YEAR = {2017},
    NUMBER = {2},
     PAGES = {101--142},
      ISSN = {0016-2736},
   MRCLASS = {57M27 (57M25)},
  MRNUMBER = {3640614},
MRREVIEWER = {Dale P. O. Rolfsen},
       DOI = {10.4064/fm80-9-2016},
       URL = {https://doi.org/10.4064/fm80-9-2016},
}

@article {MR3717970,
    AUTHOR = {Boden, Hans U. and Nagel, Matthias},
     TITLE = {Concordance group of virtual knots},
   JOURNAL = {Proc. Amer. Math. Soc.},
  FJOURNAL = {Proceedings of the American Mathematical Society},
    VOLUME = {145},
      YEAR = {2017},
    NUMBER = {12},
     PAGES = {5451--5461},
      ISSN = {0002-9939},
   MRCLASS = {57M27 (57M25)},
  MRNUMBER = {3717970},
MRREVIEWER = {Micah Whitney Chrisman},
       DOI = {10.1090/proc/13667},
       URL = {https://doi.org/10.1090/proc/13667},
}

@article {MR2351010,
    AUTHOR = {Kamada, Seiichi},
     TITLE = {Braid presentation of virtual knots and welded knots},
   JOURNAL = {Osaka J. Math.},
  FJOURNAL = {Osaka Journal of Mathematics},
    VOLUME = {44},
      YEAR = {2007},
    NUMBER = {2},
     PAGES = {441--458},
      ISSN = {0030-6126},
   MRCLASS = {57M25 (20F36)},
  MRNUMBER = {2351010},
MRREVIEWER = {Sabin\ Cautis},
       URL = {http://projecteuclid.org/euclid.ojm/1183667989},
}

@incollection {MR2371718,
    AUTHOR = {Kauffman, Louis and Lambropoulou, Sofia},
     TITLE = {The {$L$}-move and virtual braids},
 BOOKTITLE = {Intelligence of low dimensional topology 2006},
    SERIES = {Ser. Knots Everything},
    VOLUME = {40},
     PAGES = {133--142},
 PUBLISHER = {World Sci. Publ., Hackensack, NJ},
      YEAR = {2007},
   MRCLASS = {57M25},
  MRNUMBER = {2371718},
       DOI = {10.1142/9789812770967\_0017},
       URL = {https://doi.org/10.1142/9789812770967_0017},
}

@article {MR2128049,
    AUTHOR = {Kauffman, Louis H. and Lambropoulou, Sofia},
     TITLE = {Virtual braids},
   JOURNAL = {Fund. Math.},
  FJOURNAL = {Fundamenta Mathematicae},
    VOLUME = {184},
      YEAR = {2004},
     PAGES = {159--186},
      ISSN = {0016-2736},
   MRCLASS = {57M25 (20F36)},
  MRNUMBER = {2128049},
MRREVIEWER = {Seiichi Kamada},
       DOI = {10.4064/fm184-0-11},
       URL = {https://doi.org/10.4064/fm184-0-11},
}

@article {MR1997331,
    AUTHOR = {Kuperberg, Greg},
     TITLE = {What is a virtual link?},
   JOURNAL = {Algebr. Geom. Topol.},
  FJOURNAL = {Algebraic \& Geometric Topology},
    VOLUME = {3},
      YEAR = {2003},
     PAGES = {587--591},
      ISSN = {1472-2747,1472-2739},
   MRCLASS = {57M25 (57M15 57M27)},
  MRNUMBER = {1997331},
MRREVIEWER = {Masahico\ Saito},
       DOI = {10.2140/agt.2003.3.587},
       URL = {https://doi.org/10.2140/agt.2003.3.587},
}

@article {MR1749502,
    AUTHOR = {Kamada, Naoko and Kamada, Seiichi},
     TITLE = {Abstract link diagrams and virtual knots},
   JOURNAL = {J. Knot Theory Ramifications},
  FJOURNAL = {Journal of Knot Theory and its Ramifications},
    VOLUME = {9},
      YEAR = {2000},
    NUMBER = {1},
     PAGES = {93--106},
      ISSN = {0218-2165,1793-6527},
   MRCLASS = {57M25 (57M27)},
  MRNUMBER = {1749502},
       DOI = {10.1142/S0218216500000049},
       URL = {https://doi.org/10.1142/S0218216500000049},
}

@incollection {MR1905687,
    AUTHOR = {Carter, J. Scott and Kamada, Seiichi and Saito, Masahico},
     TITLE = {Stable equivalence of knots on surfaces and virtual knot
              cobordisms},
      NOTE = {Knots 2000 Korea, Vol. 1 (Yongpyong)},
   JOURNAL = {J. Knot Theory Ramifications},
  FJOURNAL = {Journal of Knot Theory and its Ramifications},
    VOLUME = {11},
      YEAR = {2002},
    NUMBER = {3},
     PAGES = {311--322},
      ISSN = {0218-2165,1793-6527},
   MRCLASS = {57M25},
  MRNUMBER = {1905687},
MRREVIEWER = {Swatee\ Naik},
       DOI = {10.1142/S0218216502001639},
       URL = {https://doi.org/10.1142/S0218216502001639},
}

@article {MR3597250,
    AUTHOR = {Bardakov, Valeriy G. and Mikhalchishina, Yuliya A. and
              Neshchadim, Mikhail V.},
     TITLE = {Representations of virtual braids by automorphisms and virtual
              knot groups},
   JOURNAL = {J. Knot Theory Ramifications},
  FJOURNAL = {Journal of Knot Theory and its Ramifications},
    VOLUME = {26},
      YEAR = {2017},
    NUMBER = {1},
     PAGES = {1750003, 17},
      ISSN = {0218-2165,1793-6527},
   MRCLASS = {57M27 (20F36 57M25)},
  MRNUMBER = {3597250},
MRREVIEWER = {Jie\ Wu},
       DOI = {10.1142/S0218216517500031},
       URL = {https://doi.org/10.1142/S0218216517500031},
}

@article {MR2493369,
    AUTHOR = {Bardakov, Valerij G. and Bellingeri, Paolo},
     TITLE = {Combinatorial properties of virtual braids},
   JOURNAL = {Topology Appl.},
  FJOURNAL = {Topology and its Applications},
    VOLUME = {156},
      YEAR = {2009},
    NUMBER = {6},
     PAGES = {1071--1082},
      ISSN = {0166-8641,1879-3207},
   MRCLASS = {20F36 (20F14 57M05)},
  MRNUMBER = {2493369},
MRREVIEWER = {Daan\ Krammer},
       DOI = {10.1016/j.topol.2008.10.002},
       URL = {https://doi.org/10.1016/j.topol.2008.10.002},
}

@article {MR1721925,
	AUTHOR = {Kauffman, Louis H.},
	TITLE = {Virtual knot theory},
	JOURNAL = {European J. Combin.},
	FJOURNAL = {European Journal of Combinatorics},
	VOLUME = {20},
	YEAR = {1999},
	NUMBER = {7},
	PAGES = {663--690},
	ISSN = {0195-6698},
	MRCLASS = {57M25 (57M27)},
	MRNUMBER = {1721925},
	MRREVIEWER = {Olivier Collin},
	DOI = {10.1006/eujc.1999.0314},
	URL = {https://doi.org/10.1006/eujc.1999.0314},
}

@article {MR3982044,
    AUTHOR = {Farinati, Marco A. and Garc\'{\i}a Galofre, Juliana},
     TITLE = {Virtual link and knot invariants from non-abelian
              {Y}ang-{B}axter 2-cocycle pairs},
   JOURNAL = {Osaka J. Math.},
  FJOURNAL = {Osaka Journal of Mathematics},
    VOLUME = {56},
      YEAR = {2019},
    NUMBER = {3},
     PAGES = {525--547},
      ISSN = {0030-6126},
   MRCLASS = {57K18 (57K10)},
  MRNUMBER = {3982044},
MRREVIEWER = {Heather A. Dye},
       URL = {https://projecteuclid.org/euclid.ojm/1563242423},
}

@article {MR2542696,
    AUTHOR = {Fenn, Roger},
     TITLE = {Biquandles and their application to virtual knots and links},
   JOURNAL = {J. Knot Theory Ramifications},
  FJOURNAL = {Journal of Knot Theory and its Ramifications},
    VOLUME = {18},
      YEAR = {2009},
    NUMBER = {6},
     PAGES = {785--789},
      ISSN = {0218-2165},
   MRCLASS = {57M27 (57M25)},
  MRNUMBER = {2542696},
       DOI = {10.1142/S0218216509007178},
       URL = {https://doi.org/10.1142/S0218216509007178},
}

@article {MR2100870,
    AUTHOR = {Fenn, Roger and Jordan-Santana, Mercedes and Kauffman, Louis},
     TITLE = {Biquandles and virtual links},
   JOURNAL = {Topology Appl.},
  FJOURNAL = {Topology and its Applications},
    VOLUME = {145},
      YEAR = {2004},
    NUMBER = {1-3},
     PAGES = {157--175},
      ISSN = {0166-8641},
   MRCLASS = {57M27 (57M25)},
  MRNUMBER = {2100870},
MRREVIEWER = {Dale P. O. Rolfsen},
       DOI = {10.1016/j.topol.2004.06.008},
       URL = {https://doi.org/10.1016/j.topol.2004.06.008},
}

@article {MR2191942,
    AUTHOR = {Kauffman, Louis H. and Manturov, Vassily O.},
     TITLE = {Virtual biquandles},
   JOURNAL = {Fund. Math.},
  FJOURNAL = {Fundamenta Mathematicae},
    VOLUME = {188},
      YEAR = {2005},
     PAGES = {103--146},
      ISSN = {0016-2736},
   MRCLASS = {57M25 (57M27)},
  MRNUMBER = {2191942},
MRREVIEWER = {Heather A. Dye},
       DOI = {10.4064/fm188-0-6},
       URL = {https://doi.org/10.4064/fm188-0-6},
}

@article {MR2128041,
    AUTHOR = {Carter, J. Scott and Elhamdadi, Mohamed and Saito, Masahico},
     TITLE = {Homology theory for the set-theoretic {Y}ang-{B}axter equation
              and knot invariants from generalizations of quandles},
   JOURNAL = {Fund. Math.},
  FJOURNAL = {Fundamenta Mathematicae},
    VOLUME = {184},
      YEAR = {2004},
     PAGES = {31--54},
      ISSN = {0016-2736},
   MRCLASS = {57M25 (55N35)},
  MRNUMBER = {2128041},
MRREVIEWER = {Seiichi Kamada},
       DOI = {10.4064/fm184-0-3},
       URL = {https://doi.org/10.4064/fm184-0-3},
}

@article {MR2515811,
    AUTHOR = {Ceniceros, Jose and Nelson, Sam},
     TITLE = {Virtual {Y}ang-{B}axter cocycle invariants},
   JOURNAL = {Trans. Amer. Math. Soc.},
  FJOURNAL = {Transactions of the American Mathematical Society},
    VOLUME = {361},
      YEAR = {2009},
    NUMBER = {10},
     PAGES = {5263--5283},
      ISSN = {0002-9947},
   MRCLASS = {57M27 (18G60)},
  MRNUMBER = {2515811},
       DOI = {10.1090/S0002-9947-09-04751-5},
       URL = {https://doi.org/10.1090/S0002-9947-09-04751-5},
}

@article {MR1273784,
	AUTHOR = {Wada, Masaaki},
	TITLE = {Twisted {A}lexander polynomial for finitely presentable
		groups},
	JOURNAL = {Topology},
	FJOURNAL = {Topology. An International Journal of Mathematics},
	VOLUME = {33},
	YEAR = {1994},
	NUMBER = {2},
	PAGES = {241--256},
	ISSN = {0040-9383},
	MRCLASS = {57M25},
	MRNUMBER = {1273784},
	MRREVIEWER = {Charles Livingston},
	DOI = {10.1016/0040-9383(94)90013-2},
	URL = {https://doi.org/10.1016/0040-9383(94)90013-2},
}

@article {MR2549477,
	AUTHOR = {Carter, J. Scott and Silver, Daniel S. and Williams, Susan G.
		and Elhamdadi, Mohamed and Saito, Masahico},
	TITLE = {Virtual knot invariants from group biquandles and their
		cocycles},
	JOURNAL = {J. Knot Theory Ramifications},
	FJOURNAL = {Journal of Knot Theory and its Ramifications},
	VOLUME = {18},
	YEAR = {2009},
	NUMBER = {7},
	PAGES = {957--972},
	ISSN = {0218-2165},
	MRCLASS = {57M27 (57M25)},
	MRNUMBER = {2549477},
	MRREVIEWER = {Seiichi Kamada},
	DOI = {10.1142/S0218216509007269},
	URL = {https://doi.org/10.1142/S0218216509007269},
}

@article {MR3666513,
	AUTHOR = {Fenn, Roger Andrew and Bartholomew, Andrew},
	TITLE = {Erratum: {B}iquandles of small size and some invariants of
		virtual and welded knots},
	JOURNAL = {J. Knot Theory Ramifications},
	FJOURNAL = {Journal of Knot Theory and its Ramifications},
	VOLUME = {26},
	YEAR = {2017},
	NUMBER = {8},
	PAGES = {1792002, 11},
	ISSN = {0218-2165},
	MRCLASS = {57M25},
	MRNUMBER = {3666513},
	DOI = {10.1142/S021821651792002X},
	URL = {https://doi.org/10.1142/S021821651792002X},
}

@article {MR3055555,
	AUTHOR = {Crans, Alissa S. and Henrich, Allison and Nelson, Sam},
	TITLE = {Polynomial knot and link invariants from the virtual
		biquandle},
	JOURNAL = {J. Knot Theory Ramifications},
	FJOURNAL = {Journal of Knot Theory and its Ramifications},
	VOLUME = {22},
	YEAR = {2013},
	NUMBER = {4},
	PAGES = {134004, 15},
	ISSN = {0218-2165},
	MRCLASS = {57M27 (57M25)},
	MRNUMBER = {3055555},
	MRREVIEWER = {Neil R. Nicholson},
	DOI = {10.1142/S021821651340004X},
	URL = {https://doi.org/10.1142/S021821651340004X},
}

@article {MR2191949,
	AUTHOR = {Fenn, Roger and Kauffman, Louis H. and Manturov, Vassily O.},
	TITLE = {Virtual knot theory---unsolved problems},
	JOURNAL = {Fund. Math.},
	FJOURNAL = {Fundamenta Mathematicae},
	VOLUME = {188},
	YEAR = {2005},
	PAGES = {293--323},
	ISSN = {0016-2736},
	MRCLASS = {57M25},
	MRNUMBER = {2191949},
	MRREVIEWER = {Charles Livingston},
	DOI = {10.4064/fm188-0-13},
	URL = {https://doi.org/10.4064/fm188-0-13},
}

@article {MR4352636,
	AUTHOR = {Bardakov, Valeriy G. and Neshchadim, Mikhail V. and Singh,
		Manpreet},
	TITLE = {Virtually symmetric representations and marked {G}auss
		diagrams},
	JOURNAL = {Topology Appl.},
	FJOURNAL = {Topology and its Applications},
	VOLUME = {306},
	YEAR = {2022},
	PAGES = {Paper No. 107936, 24},
	ISSN = {0166-8641},
	MRCLASS = {57K12 (05C10 20F36)},
	MRNUMBER = {4352636},
	MRREVIEWER = {Qingying Deng},
	DOI = {10.1016/j.topol.2021.107936},
	URL = {https://doi.org/10.1016/j.topol.2021.107936},
}

@article {MR4445569,
	AUTHOR = {Bardakov, Valeriy and Emel'yanenkov, Ivan and Ivanov, Maxim
		and Kozlovskaya, Tatyana and Nasybullov, Timur and Vesnin,
		Andrei},
	TITLE = {Virtual and universal braid groups, their quotients and
		representations},
	JOURNAL = {J. Group Theory},
	FJOURNAL = {Journal of Group Theory},
	VOLUME = {25},
	YEAR = {2022},
	NUMBER = {4},
	PAGES = {679--712},
	ISSN = {1433-5883},
	MRCLASS = {20F36 (20C15)},
	MRNUMBER = {4445569},
	MRREVIEWER = {Mohammad N. Abdulrahim},
	DOI = {10.1515/jgth-2021-0114},
	URL = {https://doi.org/10.1515/jgth-2021-0114},
}

@article {MR4196804,
	AUTHOR = {Bardakov, V. G. and Nasybullov, T. R.},
	TITLE = {Multi-switches, representations of virtual braids and
		invariants of virtual links},
	JOURNAL = {Algebra Logika},
	FJOURNAL = {Algebra i Logika. Institut Diskretno\u{\i} Matematiki i
		Informatiki},
	VOLUME = {59},
	YEAR = {2020},
	NUMBER = {4},
	PAGES = {500--506},
	ISSN = {0373-9252},
	MRCLASS = {20F36 (57K12)},
	MRNUMBER = {4196804},
	MRREVIEWER = {Pierre A. Lochak},
	DOI = {10.33048/alglog.2020.59.406},
	URL = {https://doi.org/10.33048/alglog.2020.59.406},
}

@article {MR2384830,
    AUTHOR = {Hrencecin, David and Kauffman, Louis H.},
     TITLE = {Biquandles for virtual knots},
   JOURNAL = {J. Knot Theory Ramifications},
  FJOURNAL = {Journal of Knot Theory and its Ramifications},
    VOLUME = {16},
      YEAR = {2007},
    NUMBER = {10},
     PAGES = {1361--1382},
      ISSN = {0218-2165},
   MRCLASS = {57M27 (57M25)},
  MRNUMBER = {2384830},
MRREVIEWER = {Heather A. Dye},
       DOI = {10.1142/S0218216507005841},
       URL = {https://doi.org/10.1142/S0218216507005841},
}

@article {MR2128040,
	AUTHOR = {Budden, Stephen and Fenn, Roger},
	TITLE = {The equation {$[B,(A-1)(A,B)]=0$} and virtual knots and links},
	JOURNAL = {Fund. Math.},
	FJOURNAL = {Fundamenta Mathematicae},
	VOLUME = {184},
	YEAR = {2004},
	PAGES = {19--29},
	ISSN = {0016-2736},
	MRCLASS = {57M25 (57M27)},
	MRNUMBER = {2128040},
	MRREVIEWER = {Gw\'{e}na\"{e}l Massuyeau},
	DOI = {10.4064/fm184-0-2},
	URL = {https://doi.org/10.4064/fm184-0-2},
}

@article {MR1167178,
	AUTHOR = {Wada, Masaaki},
	TITLE = {Group invariants of links},
	JOURNAL = {Topology},
	FJOURNAL = {Topology. An International Journal of Mathematics},
	VOLUME = {31},
	YEAR = {1992},
	NUMBER = {2},
	PAGES = {399--406},
	ISSN = {0040-9383},
	MRCLASS = {57M25 (20F36)},
	MRNUMBER = {1167178},
	MRREVIEWER = {J\'{o}zef H. Przytycki},
	DOI = {10.1016/0040-9383(92)90029-H},
	URL = {https://doi.org/10.1016/0040-9383(92)90029-H},
}

@ARTICLE{2024arXiv240101533E,
       author = {{Elhamdadi}, Mohamed and {Singh}, Manpreet},
        title = "{Twisted Yang-Baxter sets, cohomology theory, and application to knots}",
      journal = {arXiv e-prints},
     keywords = {Mathematics - Geometric Topology, Mathematics - Quantum Algebra, 57K10, 57K45, 57K12, 16T25, 57T99},
         year = 2024,
        month = jan,
          eid = {arXiv:2401.01533},
        pages = {arXiv:2401.01533},
          doi = {10.48550/arXiv.2401.01533},
archivePrefix = {arXiv},
       eprint = {2401.01533},
 primaryClass = {math.GT},
       adsurl = {https://ui.adsabs.harvard.edu/abs/2024arXiv240101533E},
      adsnote = {Provided by the SAO/NASA Astrophysics Data System}
}

@article {MR4117062,
    AUTHOR = {Bellingeri, Paolo and Paris, Luis},
     TITLE = {Virtual braids and permutations},
   JOURNAL = {Ann. Inst. Fourier (Grenoble)},
  FJOURNAL = {Universit\'{e} de Grenoble. Annales de l'Institut Fourier},
    VOLUME = {70},
      YEAR = {2020},
    NUMBER = {3},
     PAGES = {1341--1362},
      ISSN = {0373-0956},
   MRCLASS = {20F36 (20E36)},
  MRNUMBER = {4117062},
MRREVIEWER = {Jean Fromentin},
       URL = {http://aif.cedram.org/item?id=AIF_2020__70_3_1341_0},
}

@article {MR3597250,
    AUTHOR = {Bardakov, Valeriy G. and Mikhalchishina, Yuliya A. and
              Neshchadim, Mikhail V.},
     TITLE = {Representations of virtual braids by automorphisms and virtual
              knot groups},
   JOURNAL = {J. Knot Theory Ramifications},
  FJOURNAL = {Journal of Knot Theory and its Ramifications},
    VOLUME = {26},
      YEAR = {2017},
    NUMBER = {1},
     PAGES = {1750003, 17},
      ISSN = {0218-2165},
   MRCLASS = {57M27 (20F36 57M25)},
  MRNUMBER = {3597250},
MRREVIEWER = {Jie Wu},
       DOI = {10.1142/S0218216517500031},
       URL = {https://doi.org/10.1142/S0218216517500031},
}

@article {MR3519103,
    AUTHOR = {Bellingeri, Paolo and Cisneros de la Cruz, Bruno A. and Paris,
              Luis},
     TITLE = {A simple solution to the word problem for virtual braid
              groups},
   JOURNAL = {Pacific J. Math.},
  FJOURNAL = {Pacific Journal of Mathematics},
    VOLUME = {283},
      YEAR = {2016},
    NUMBER = {2},
     PAGES = {271--287},
      ISSN = {0030-8730},
   MRCLASS = {20F36 (03B25 20F10 57M27)},
  MRNUMBER = {3519103},
MRREVIEWER = {Volker Gebhardt},
       DOI = {10.2140/pjm.2016.283.271},
       URL = {https://doi.org/10.2140/pjm.2016.283.271},
}

@article {MR2153118,
    AUTHOR = {Dye, H. A. and Kauffman, Louis H.},
     TITLE = {Minimal surface representations of virtual knots and links},
   JOURNAL = {Algebr. Geom. Topol.},
  FJOURNAL = {Algebraic \& Geometric Topology},
    VOLUME = {5},
      YEAR = {2005},
     PAGES = {509--535},
      ISSN = {1472-2747},
   MRCLASS = {57M25 (57M27 57N05)},
  MRNUMBER = {2153118},
MRREVIEWER = {Martin Scharlemann},
       DOI = {10.2140/agt.2005.5.509},
       URL = {https://doi.org/10.2140/agt.2005.5.509},
}

@article {MR1997331,
    AUTHOR = {Kuperberg, Greg},
     TITLE = {What is a virtual link?},
   JOURNAL = {Algebr. Geom. Topol.},
  FJOURNAL = {Algebraic \& Geometric Topology},
    VOLUME = {3},
      YEAR = {2003},
     PAGES = {587--591},
      ISSN = {1472-2747},
   MRCLASS = {57M25 (57M15 57M27)},
  MRNUMBER = {1997331},
MRREVIEWER = {Masahico Saito},
       DOI = {10.2140/agt.2003.3.587},
       URL = {https://doi.org/10.2140/agt.2003.3.587},
}

@article {MR2100692,
    AUTHOR = {Kamada, Naoko},
     TITLE = {Span of the {J}ones polynomial of an alternating virtual link},
   JOURNAL = {Algebr. Geom. Topol.},
  FJOURNAL = {Algebraic \& Geometric Topology},
    VOLUME = {4},
      YEAR = {2004},
     PAGES = {1083--1101},
      ISSN = {1472-2747},
   MRCLASS = {57M27 (57M25)},
  MRNUMBER = {2100692},
MRREVIEWER = {Gw\'{e}na\"{e}l Massuyeau},
       DOI = {10.2140/agt.2004.4.1083},
       URL = {https://doi.org/10.2140/agt.2004.4.1083},
}

@article {MR1914297,
    AUTHOR = {Kamada, Naoko},
     TITLE = {On the {J}ones polynomials of checkerboard colorable virtual
              links},
   JOURNAL = {Osaka J. Math.},
  FJOURNAL = {Osaka Journal of Mathematics},
    VOLUME = {39},
      YEAR = {2002},
    NUMBER = {2},
     PAGES = {325--333},
      ISSN = {0030-6126},
   MRCLASS = {57M27},
  MRNUMBER = {1914297},
MRREVIEWER = {Louis H. Kauffman},
       URL = {http://projecteuclid.org/euclid.ojm/1153492771},
}
	
@article {MR2515811,
    AUTHOR = {Ceniceros, Jose and Nelson, Sam},
     TITLE = {Virtual {Y}ang-{B}axter cocycle invariants},
   JOURNAL = {Trans. Amer. Math. Soc.},
  FJOURNAL = {Transactions of the American Mathematical Society},
    VOLUME = {361},
      YEAR = {2009},
    NUMBER = {10},
     PAGES = {5263--5283},
      ISSN = {0002-9947},
   MRCLASS = {57M27 (18G60)},
  MRNUMBER = {2515811},
       DOI = {10.1090/S0002-9947-09-04751-5},
       URL = {https://doi.org/10.1090/S0002-9947-09-04751-5},
}

@article {MR4284604,
    AUTHOR = {Boden, Hans U. and Chrisman, Micah},
     TITLE = {Virtual concordance and the generalized {A}lexander
              polynomial},
   JOURNAL = {J. Knot Theory Ramifications},
  FJOURNAL = {Journal of Knot Theory and its Ramifications},
    VOLUME = {30},
      YEAR = {2021},
    NUMBER = {5},
     PAGES = {Paper No. 2150030, 35},
      ISSN = {0218-2165},
   MRCLASS = {57K12 (57K14)},
  MRNUMBER = {4284604},
MRREVIEWER = {William Rushworth},
       DOI = {10.1142/S0218216521500309},
       URL = {https://doi.org/10.1142/S0218216521500309},
}

@article {MR1280610,
    AUTHOR = {Jaeger, F. and Kauffman, L. H. and Saleur, H.},
     TITLE = {The {C}onway polynomial in {${\bf R}^3$} and in thickened
              surfaces: a new determinant formulation},
   JOURNAL = {J. Combin. Theory Ser. B},
  FJOURNAL = {Journal of Combinatorial Theory. Series B},
    VOLUME = {61},
      YEAR = {1994},
    NUMBER = {2},
     PAGES = {237--259},
      ISSN = {0095-8956},
   MRCLASS = {57M25 (57M15)},
  MRNUMBER = {1280610},
MRREVIEWER = {Thang T. Q. Le},
       DOI = {10.1006/jctb.1994.1047},
       URL = {https://doi.org/10.1006/jctb.1994.1047},
}

@article {MR1822148,
    AUTHOR = {Silver, Daniel S. and Williams, Susan G.},
     TITLE = {Alexander groups and virtual links},
   JOURNAL = {J. Knot Theory Ramifications},
  FJOURNAL = {Journal of Knot Theory and its Ramifications},
    VOLUME = {10},
      YEAR = {2001},
    NUMBER = {1},
     PAGES = {151--160},
      ISSN = {0218-2165},
   MRCLASS = {57M27 (57M25)},
  MRNUMBER = {1822148},
MRREVIEWER = {Michael Heusener},
       DOI = {10.1142/S0218216501000792},
       URL = {https://doi.org/10.1142/S0218216501000792},
}
\end{filecontents*}

\bibliography{references1}{}

\begin{thebibliography}{10}

\bibitem{MR2493369}
V.~G. Bardakov and P.~Bellingeri.
\newblock Combinatorial properties of virtual braids.
\newblock {\em Topology Appl.}, 156(6):1071--1082, 2009.

\bibitem{MR3597250}
V.~G. Bardakov, Y.~A. Mikhalchishina, and M.~V. Neshchadim.
\newblock Representations of virtual braids by automorphisms and virtual knot
  groups.
\newblock {\em J. Knot Theory Ramifications}, 26(1):1750003, 17, 2017.

\bibitem{MR4352636}
V.~G. Bardakov, M.~V. Neshchadim, and M.~Singh.
\newblock Virtually symmetric representations and marked {G}auss diagrams.
\newblock {\em Topology Appl.}, 306:Paper No. 107936, 24, 2022.

\bibitem{MR4284604}
H.~U. Boden and M.~Chrisman.
\newblock Virtual concordance and the generalized {A}lexander polynomial.
\newblock {\em J. Knot Theory Ramifications}, 30(5):Paper No. 2150030, 35,
  2021.

\bibitem{MR3640614}
H.~U. Boden, R.~Gaudreau, E.~Harper, A.~J. Nicas, and L.~White.
\newblock Virtual knot groups and almost classical knots.
\newblock {\em Fund. Math.}, 238(2):101--142, 2017.

\bibitem{MR3717970}
H.~U. Boden and M.~Nagel.
\newblock Concordance group of virtual knots.
\newblock {\em Proc. Amer. Math. Soc.}, 145(12):5451--5461, 2017.

\bibitem{MR1905687}
J.~S. Carter, S.~Kamada, and M.~Saito.
\newblock Stable equivalence of knots on surfaces and virtual knot cobordisms.
\newblock volume~11, pages 311--322. 2002.
\newblock Knots 2000 Korea, Vol. 1 (Yongpyong).

\bibitem{MR2515811}
J.~Ceniceros and S.~Nelson.
\newblock Virtual {Y}ang-{B}axter cocycle invariants.
\newblock {\em Trans. Amer. Math. Soc.}, 361(10):5263--5283, 2009.

\bibitem{MR2153118}
H.~A. Dye and L.~H. Kauffman.
\newblock Minimal surface representations of virtual knots and links.
\newblock {\em Algebr. Geom. Topol.}, 5:509--535, 2005.

\bibitem{2024arXiv240101533E}
M.~{Elhamdadi} and M.~{Singh}.
\newblock {Twisted Yang-Baxter sets, cohomology theory, and application to
  knots}.
\newblock {\em arXiv e-prints}, page arXiv:2401.01533, Jan. 2024.

\bibitem{MR3982044}
M.~A. Farinati and J.~Garc\'{\i}a~Galofre.
\newblock Virtual link and knot invariants from non-abelian {Y}ang-{B}axter
  2-cocycle pairs.
\newblock {\em Osaka J. Math.}, 56(3):525--547, 2019.

\bibitem{MR2100870}
R.~Fenn, M.~Jordan-Santana, and L.~Kauffman.
\newblock Biquandles and virtual links.
\newblock {\em Topology Appl.}, 145(1-3):157--175, 2004.

\bibitem{MR3666513}
R.~A. Fenn and A.~Bartholomew.
\newblock Erratum: {B}iquandles of small size and some invariants of virtual
  and welded knots.
\newblock {\em J. Knot Theory Ramifications}, 26(8):1792002, 11, 2017.

\bibitem{MR2384830}
D.~Hrencecin and L.~H. Kauffman.
\newblock Biquandles for virtual knots.
\newblock {\em J. Knot Theory Ramifications}, 16(10):1361--1382, 2007.

\bibitem{MR1280610}
F.~Jaeger, L.~H. Kauffman, and H.~Saleur.
\newblock The {C}onway polynomial in {${\bf R}^3$} and in thickened surfaces: a
  new determinant formulation.
\newblock {\em J. Combin. Theory Ser. B}, 61(2):237--259, 1994.

\bibitem{MR1914297}
N.~Kamada.
\newblock On the {J}ones polynomials of checkerboard colorable virtual links.
\newblock {\em Osaka J. Math.}, 39(2):325--333, 2002.

\bibitem{MR2100692}
N.~Kamada.
\newblock Span of the {J}ones polynomial of an alternating virtual link.
\newblock {\em Algebr. Geom. Topol.}, 4:1083--1101, 2004.

\bibitem{MR1749502}
N.~Kamada and S.~Kamada.
\newblock Abstract link diagrams and virtual knots.
\newblock {\em J. Knot Theory Ramifications}, 9(1):93--106, 2000.

\bibitem{MR2351010}
S.~Kamada.
\newblock Braid presentation of virtual knots and welded knots.
\newblock {\em Osaka J. Math.}, 44(2):441--458, 2007.

\bibitem{MR2371718}
L.~Kauffman and S.~Lambropoulou.
\newblock The {$L$}-move and virtual braids.
\newblock In {\em Intelligence of low dimensional topology 2006}, volume~40 of
  {\em Ser. Knots Everything}, pages 133--142. World Sci. Publ., Hackensack,
  NJ, 2007.

\bibitem{MR1721925}
L.~H. Kauffman.
\newblock Virtual knot theory.
\newblock {\em European J. Combin.}, 20(7):663--690, 1999.

\bibitem{MR2128049}
L.~H. Kauffman and S.~Lambropoulou.
\newblock Virtual braids.
\newblock {\em Fund. Math.}, 184:159--186, 2004.

\bibitem{MR2191942}
L.~H. Kauffman and V.~O. Manturov.
\newblock Virtual biquandles.
\newblock {\em Fund. Math.}, 188:103--146, 2005.

\bibitem{MR1997331}
G.~Kuperberg.
\newblock What is a virtual link?
\newblock {\em Algebr. Geom. Topol.}, 3:587--591, 2003.

\bibitem{MR1916950}
V.~O. Manturov.
\newblock On invariants of virtual links.
\newblock {\em Acta Appl. Math.}, 72(3):295--309, 2002.

\bibitem{MR1822148}
D.~S. Silver and S.~G. Williams.
\newblock Alexander groups and virtual links.
\newblock {\em J. Knot Theory Ramifications}, 10(1):151--160, 2001.

\bibitem{MR1167178}
M.~Wada.
\newblock Group invariants of links.
\newblock {\em Topology}, 31(2):399--406, 1992.

\end{thebibliography}
\bibliographystyle{abbrv}
\end{document}